\documentclass{amsproc}
\usepackage{xcolor}
\usepackage{amssymb}
\newtheorem{theorem}{Theorem}[section]

\usepackage{geometry}
 \usepackage{enumitem}
\theoremstyle{definition}
\newtheorem{definition}[theorem]{Definition}

\theoremstyle{remark}

\numberwithin{equation}{section}
\usepackage{epsfig} 
\usepackage{epstopdf} 

\begin{document}

\title{On the Set-Valued Katugampola Fractional Integral: Properties and Regular Selections}



\author{Parneet Kaur}
\address{Department of Mathematics, 
Punjab Engineering college
(Deemed to be University), 
Sector 12 Chandigarh 160012, India}
\email{parneetkaur.phd24maths@pec.edu.in}


\author{Rattan Lal}
\address{Department of Mathematics, 
Punjab Engineering college
(Deemed to be University), 
Sector 12 Chandigarh 160012, India}
\email{rattanlal@pec.edu.in}

\author{Ankit Kumar}
\address{Department of Mathematics, 
Punjab Engineering college
(Deemed to be University), 
Sector 12 Chandigarh 160012, India}
\email{ankitkumar@pec.edu.in}


\subjclass[2020]{Primary 28B20; Secondary 26A33}
\date{January 1, 1994 and, in revised form, June 22, 1994.}


\keywords{Katugampola Fractional Integral, Set-Valued mappings, Regular Selections}

\begin{abstract}
In this paper, we develop a theory of generalized fractional integration for set-valued mappings for the Katugampola fractional integral, which unifies the classical Riemann–Liouville and Hadamard fractional integrals. The Katugampola fractional integral of a set-valued mapping is studied using integrable selections.  We study its properties w.r.t. the Hausdorff metric on the space of nonempty compact subsets of $\mathbb{R}$ and several fundamental analytical characteristics including convexity, boundedness and continuity are preserved under Katugampola fractional integration. Furthermore, we establish that both bounded variation and Lipschitz regularity of a set-valued mapping are preserved under its Katugampola fractional integral. We investigate the existence of regular selections associated with the Katugampola fractional integral and show that whenever the original set-valued mapping admits a selection with a specified regularity property, the corresponding Katugampola fractional integral does as well. 
\end{abstract}

\maketitle

\section{Introduction}
 Fractional calculus has emerged as a very powerful mathematical discipline that has extended the classical notions of differentiation and integration from integer to non-integer orders. This offers a natural framework for modeling processes with memory and hereditary effects. Unlike classical calculus, which operates through local and integer-order operations, fractional operators are used to encode global information about the history of a system, making them well-suited for describing complex phenomena arising in viscoelasticity, control theory, mathematical biology, and diffusion processes \cite{Kilbas,Podlubny}. 
 
Among the most foundational operators in fractional calculus, the Riemann-Liouville fractional integral is of primary interest. It was developed by the pioneering contributions of Liouville and Riemann and later formalized by Sonin and Letnikov. This fractional operator serves as the primary building block for defining fractional derivatives and solving fractional differential equations \cite{Podlubny}. Its analytical properties, including the linearity, boundedness and a semigroup composition rule, have been exploited in both theoretical and applied studies \cite{Samko}. 

The more generalized class of fractional integrals is introduced by Katugampola in \cite{Katu1}, now widely referred to as Katugampola fractional integrals. This is a generalization of the Riemann-Liouville and Hadamard fractional integrals through an additional parameter $\rho>0$. The Riemann-Liouville integral is recovered by taking $\rho=1$  and the Hadamard integral is obtained as $\rho \to 0^+$ \cite{Katu1}. More results on it are given in \cite{Matar,Priya}.  The Katugampola fractional integral has a wide application in fractal geometry, integral inequalities, and fractional differential equations.

Set-valued analysis expands the classical maps to the study of maps that assign sets, rather than single points, to each input. This provides a rigorous mathematical language for capturing uncertainty, imprecision and multi-valued dynamics arising in optimization, control theory, differential inclusions and mathematical economics \cite{Kisie, Levin, Michta1}. The foundational framework for integrating set-valued mappings was studied by Aumann \cite{Aumann} through the notion of an integral defined from the collection of all integrable selections and was further developed by Artstein \cite{Artstein} and others. Belov and Chistyakov \cite{Belov} established a fundamental selection principle for guaranteeing the existence of regular selections, that is, selections inheriting regularity properties such as bounded variation or Lipschitz continuity from set-valued mappings possessing corresponding regularity. In \cite{Michta}, selection properties of set-valued mappings and different types of set-valued integrals were studied. For a broader account of set-valued integration and selection theory, we can refer to \cite{Aubin1, Aubin2}. 

In 2015, Lupulescu \cite{Lupulescu} studied fractional calculus for interval-valued functions and established key properties, such as monotonicity, linearity, and semigroup property. Further Hermite-Hadamard-type inclusions via Riemann-Liouville fractional integrals were studied by Kara et al. \cite{Kara} and the Cauchy-type problem for interval-valued fractional differential equations was studied in \cite{Shen}. The definition of bounded variation in the Azel\'a sense is given in \cite{Clarkson}. 

In 2018, Verma \cite{Verma2}  studied the analytical properties for the singled valued Katugampola fractional integral and later for the Bivariate integral in the paper \cite{Verma}. The B. Yu et al. have explored the various properties of the singled valued Katugampola and its dimensional analysis in various settings in paper \cite{Yu,Yu2,Yu3,Yu4}. In \cite{Chandra22}, studied the box dimension of Katugampola integral of continuous function on rectangular domain. In \cite{Agrawal} the authors studied the dimension preserving approximation for the Riemann-Liouville fractional integrals.

Recently, Chandra and Abbas \cite{Chandra25} took a significant step in combining the two directions above by introducing the Riemann-Liouville fractional integral of set-valued mappings via integrable selections. Motivated by the more generalization offered by the Katugampola operator \cite{Katu1,Katu2} and by the foundational framework laid in \cite{Aubin2}, in this paper we studied the set-valued Katugampola fractional integral. We define this operator selection-wise in direct analogy with the set-valued Riemann-Liouville integral.

 This paper is organized as follows: Section \ref{Sec2} contains the preliminaries and background required for this paper. In Section \ref{sec3}, we present some basic properties of the set-valued Katugampola fractional integral such as nonemptiness, boundedness, compactness, convexity and continuity.  In Section \ref{sec4}, we prove the bounded Variation and Lipschitz continuity are preserved by the set-valued Katukampola fractional integral operator under certain conditions. In the last Section \ref{sec5}, the existence of some regular selections is shown. 
 
\section{Preliminaries} \label{Sec2}
In the present section, we give some preliminary definitions and results that are needed in the paper. Here, the notation of $I$ denotes the notation of interval $[a,b].$
\begin{definition}
    For  non-empty compact subsets $A_1$ and $A_2$, the Hausdorff distance is defined as 
    \[ H_d(A_1,A_2) =  \max\big\{\sup_{a_2\in A_2} \inf_{a_1\in A_1}|a_1-a_2|,\sup_{a_1\in A_1} \inf_{a_2\in A_2}|a_1-a_2| \big \}. \]
\end{definition}
Let $\mathrm{K}(\mathbb{R})$ denote the collection of all non-empty compact subsets of $\mathbb{R}$ and note that $(\mathrm{K}(\mathbb{R}),H_d)$ is a complete metric space.  
\begin{definition}
Let $F$ be a set-valued map. A  single valued, measurable function  $f:I \to \mathbb{R}$ is called a selection of $F$ if  $f(t) \in F(t) $ a.e. and $f \in L^1(I).$ 
\end{definition}
Let $\mathfrak{I}$ be the collection of all integrable selections and $L^1(I)$ be the collection of all Lebesgue integrable functions over $I.$ 
\begin{definition}
  Let $F:I\rightrightarrows \mathbb{R}$ be a set-valued function and $\mathfrak{I}$ be the collection of all integrable selections of $F.$ Riemann-Liouville set-valued fractional integral of $F$ is defined as 
    \begin{equation*}
        \begin{aligned}
            _a\mathfrak{D}^{\nu}F(t)=\bigg\{\frac{1}{\Gamma(\nu)}\int_a^t(t-x)^{\nu-1}f(x)dx:f\in\mathfrak{I}\bigg\}
        \end{aligned}
    \end{equation*}
    where $\nu>0$ is a real number.
\end{definition}
    \begin{definition}
        The Katugampola fractional integral of a function  $f:I \to \mathbb{R}$ is defined as 
        \begin{equation*}
            \begin{aligned}
        _a^\rho\mathfrak{D}^\nu f(t)= \frac{(\rho+1)^{1-\nu}}{\Gamma(\nu)}\int_a^t(t^{\rho+1}-x^{\rho+1})^{\nu-1}x^{\rho}f(x)dx
    \end{aligned}
        \end{equation*}
        where $\nu>0$ and $\rho \neq -1$ are real numbers. 
    \end{definition}
\begin{definition} \label{K1}
Let $F:I\rightrightarrows \mathbb{R}$ be a set-valued function and $\mathfrak{I}$ be the collection of all integrable selections of $F.$ The set-valued Katugampola fractional integral is defined as 
\begin{equation*}
    \begin{aligned}
        _a^\rho\mathfrak{D}^\nu F(t)=\bigg\{\frac{(\rho+1)^{1-\nu}}{\Gamma(\nu)}\int_a^t(t^{\rho+1}-x^{\rho+1})^{\nu-1}x^{\rho}f(x)dx:f\in\mathfrak{I}\bigg\}
    \end{aligned}
\end{equation*}
where $\nu>0$ and $\rho \neq -1$ are real numbers.
\end{definition}
    
\begin{definition}
    Let $F$ be a compact set-valued function such that $F:I\rightrightarrows \mathbb{R}$ and for each $u\in [a,b]$, $F(u)$ is compact. For any partition $\mathcal{T}=\{t_0,t_1,t_2,\ldots t_n\}\subset I= [a,b]$ and $n\in \mathbb{N},$ the total Jordan variation of $ F$ is
    \[ V_a^b(F)=\sup_{\mathcal{T}} \sum_{i=1}^nH_d(F(t_i),F(t_{i-1})), \]
    with the supremum over all partitions $\mathcal{T}$ of $I.$ If $V_a^b(F)< \infty,$ then we say that $F$ is of bounded variation on $I.$ 

 \begin{definition}
     A set valued map $F:I\to \mathbb{R}$ is said to be Lipschitz on $I$ with respect to $H_d$ if there exists a constant $C_F>0$ such that 
     \[ H_d(F(t_1),F(t_2)) \le C_F|t_1-t_2| \text{ for all } t_1,t_2 \in I \] 
     and such a constant $C_F$ is called the Lipschitz constant of $F.$ 
 \end{definition}   
    \begin{definition}
        A set-valued map $F:I \rightrightarrows\mathbb{R}$ is said to be an integrably bounded function if there exists an integrable function $m:I\to \mathbb{R}$, such that  for all $t\in I$ and $s\in F(t)$ satisfies the $|s|\le m(t).$
    \end{definition}
    
\end{definition}

\section{Some Basic Properties of the Set-Valued Katugampola Integral} \label{sec3}
\begin{theorem} \label{bounded}
Let $\mathcal{I}=[a,b]$ and $0 \leq a<b$ be a subset of $\mathbb{R}.$ Define the set-valued map $F:\mathcal{I}\rightrightarrows\mathbb{R}$ such that $F(t)$ is nonempty, convex and compact for all $t\in \mathcal{I}.$
If $F$ is integrably bounded and Borel measurable then the set-valued Katugampola integral is nonempty and bounded on $\mathcal{I}.$
\end{theorem}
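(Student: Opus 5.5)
The plan has two parts. First show that the set $\mathfrak{I}$ of integrable selections of $F$ is nonempty. Then bound every element of ${}_a^\rho\mathfrak{D}^\nu F(t)$ uniformly using the integrable bound $m$.

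\emph{Nonemptiness.} Since $F$ is Borel measurable with nonempty compact (hence closed) values in $\mathbb{R}$, the Kuratowski--Ryll-Nardzewski measurable selection theorem gives a measurable $f:\mathcal{I}\to\mathbb{R}$ with $f(t)\in F(t)$ for all $t$. In the one-dimensional setting we could also be explicit: $F(t)=[\underline{f}(t),\overline{f}(t)]$ by convexity and compactness, and $\underline f(t)=\min F(t)$ is Borel measurable because $\{t:\underline f(t)\le c\}=\{t:F(t)\cap(-\infty,c]\neq\emptyset\}$. Integrable boundedness gives $|f(t)|\le m(t)$, so $f\in L^1(\mathcal{I})$ and $f\in\mathfrak{I}$. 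It remains to check that the Katugampola integral of $f$ is a finite real number. Here I take $\rho>-1$ and $a\ge0$, so that $x^\rho$ and $t^{\rho+1}-x^{\rho+1}$ make sense and the latter is positive for $x<t$; this is the main technical point to check. Then ${}_a^\rho\mathfrak{D}^\nu F(t)\neq\emptyset$ for each $t$.

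\emph{Boundedness.} For any $f\in\mathfrak{I}$ and $t\in\mathcal{I}$,
\[
\bigl|{}_a^\rho\mathfrak{D}^\nu f(t)\bigr|\le \frac{(\rho+1)^{1-\nu}}{\Gamma(\nu)}\int_a^t (t^{\rho+1}-x^{\rho+1})^{\nu-1}x^\rho m(x)\,dx .
\]
The right side is independent of $f$, so it suffices to show it is finite, and preferably bounded uniformly in $t$.

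\emph{Case $\nu\ge1$.} The kernel $(t^{\rho+1}-x^{\rho+1})^{\nu-1}x^\rho$ is bounded on $\{a\le x\le t\le b\}$ (for $\rho\ge0$), so the bound is at most $C\|m\|_{L^1}$.

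\emph{Case $0<\nu<1$ (the main obstacle).} Here the kernel is singular at $x=t$, and merely integrable $m$ does not make the product integrable in general. I would first substitute $u=x^{\rho+1}$, $du=(\rho+1)x^\rho dx$, which turns the integral into a Riemann--Liouville-type integral
\[
\frac{(\rho+1)^{-\nu}}{\Gamma(\nu)}\int_{a^{\rho+1}}^{t^{\rho+1}}(t^{\rho+1}-u)^{\nu-1}\,m\bigl(u^{1/(\rho+1)}\bigr)\,du .
\]
This is finite for a.e.\ $t$ by Young's/Fubini (a convolution of an $L^1$ kernel with an $L^1$ function). For a uniform bound in $t$, I would assume, as the paper's setting seemingly intends, that $m$ is essentially bounded, say $|m|\le M$. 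Then
\[
\bigl|{}_a^\rho\mathfrak{D}^\nu f(t)\bigr|\le \frac{M(\rho+1)^{-\nu}}{\Gamma(\nu+1)}\bigl(b^{\rho+1}-a^{\rho+1}\bigr)^\nu ,
\]
which bounds the set uniformly over $f$ and $t$. If only $m\in L^1$ is available, I would state boundedness for a.e.\ $t$ (each set lies in $[-g(t),g(t)]$ with $g$ the finite convolution above). I expect this justification of finiteness in the singular regime to be the delicate step of the proof.
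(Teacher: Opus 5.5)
Your argument is essentially the paper's. Nonemptiness comes from a measurable selection theorem together with integrable boundedness. Boundedness comes from integrating the kernel in closed form to get the uniform estimate
\[
\bigl|{}_a^\rho\mathfrak{D}^\nu f(t)\bigr|\le \frac{M(\rho+1)^{-\nu}}{\Gamma(\nu+1)}\bigl(b^{\rho+1}-a^{\rho+1}\bigr)^\nu ,
\]
which has exactly the paper's constant.

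The real difference is where $M$ comes from. The paper writes ``As $F$ is bounded on $\mathcal{I}$'' and sets $M=\sup_{t}\sup_{s\in F(t)}|s|$. That is not among the hypotheses; you state the extra assumption $m\in L^\infty$ openly. Your caution is justified, because for $0<\nu<1$ the statement as written fails. Take $\rho=0$, $a=0$, $b=1$, $\nu=\tfrac12$, $m(x)=|x-\tfrac12|^{-1/2}$ (with $m(\tfrac12)=0$) and $F(t)=[0,m(t)]$. The bounded selections $f_n=\min(m,n)$ give $\frac{1}{\Gamma(1/2)}\int_0^{1/2}(\tfrac12-x)^{-1/2}f_n(x)\,dx\to\infty$ by monotone convergence, so ${}_0^0\mathfrak{D}^{1/2}F(\tfrac12)$ is unbounded.

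Your separate treatment of $\nu\ge1$, $\rho\ge0$ is a genuine improvement over the paper. There the kernel is bounded and you get the bound $C\|m\|_{L^1}$, so integrable boundedness alone suffices; the paper does not observe this. Your restriction $\rho>-1$ is also what the paper's computation actually uses, despite the stated $\rho\neq-1$: it needs positivity of $t^{\rho+1}-x^{\rho+1}$ and the antiderivative carrying the factor $(\rho+1)^{-1}$.

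One point to tidy. You assert ${}_a^\rho\mathfrak{D}^\nu F(t)\neq\emptyset$ for every $t$ before finiteness of the integral is settled. Without an extra assumption this can also fail. With $F(t)=\{m(t)\}$ in the example above, every selection equals $m$ a.e., its integral at $t=\tfrac12$ is $+\infty$, and the set at $\tfrac12$ contains no real number. So nonemptiness for every $t$ needs the same hypothesis that boundedness does: either $m$ essentially bounded, or $\nu\ge1$ with a bounded kernel (e.g.\ $\rho\ge0$, or $a>0$). With $m\in L^\infty$ and $\rho>-1$, your argument proves both conclusions for all $t$, exactly as the paper's (implicitly strengthened) argument does.
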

 \begin{proof}
    Firstly, we show that $_a^\rho\mathfrak{D}^\nu F(t)$ is non-empty set. Since $F$ is non-empty and closed, it admits a Borel-measurable value. By measurable selection theorem in \cite{Aumann} there exists a Lebesgue measurable mapping $f:\mathcal{I}\to \mathbb{R}$ such that \[ f(t)\in F(t) \text{ for a.e. } t\in \mathcal{I},\] 
    and also integrable bounded condition implies that for all $s\in F(t)$ the $|s|\le m(t)$ where $m:\mathcal{I} \to \mathbb{R}$ is an integrable function, hence every measurable selection is integrable. Consequently, the integral \[ \int_a^t(t^{\rho+1}-x^{\rho+1})^{\nu-1}x^{\rho}f(x)dx\] is well defined. Defining \[ \frac{(\rho+1)^{1-\nu}}{\Gamma(\nu)}\int_a^t(t^{\rho+1}-x^{\rho+1})^{\nu-1}x^{\rho}f(x)dx,\] this quantity belongs to $_a^\rho\mathfrak{D}^\nu F(t).$ Hence, $_a^\rho\mathfrak{D}^\nu F(t)$ is a non-empty set. \\ Next, we show that $_a^\rho\mathfrak{D}^\nu F(t)$ is bounded. As $F$ is bounded on $\mathcal{I}$, therefore, it will satisfy    $\sup_{t\in I}H_d(F(t),\{0\})<\infty.$ This can be simplified as,  
    \begin{equation*}
        \begin{aligned}
     H_d(F(t),\{0\})=\max\bigg\{ \sup_{s\in F(t)}|s|,\inf_{s \in F(t) }|s|\bigg\}=\sup_{s \in F(t)}|s|,
        \end{aligned}
    \end{equation*}
    hence, $F$ is bounded it satisfies the $ M= \sup_{t\in I}\sup_{s \in F(t) }|s| < \infty.$
    \begin{equation*}
        \begin{aligned}
            H_d(_a^\rho\mathfrak{D}^\nu F(t),\{0\})=&\sup_{ y \in _a^\rho\mathfrak{D}^\nu F(t)} |y|\\
            =& \sup_{f \in \mathfrak{I}}\Bigg|\frac{(\rho+1)^{1-\nu}}{\Gamma(\nu)}\int_a^t(t^{\rho+1}-x^{\rho+1})^{\nu-1}x^{\rho}f(x)dx\Bigg|\\ 
            \le&  \frac{(\rho+1)^{1-\nu}}{\Gamma(\nu)} \sup_{f \in \mathfrak{I}}\int_a^t |(t^{\rho+1}-x^{\rho+1})^{\nu-1}x^{\rho}||f(x)|dx 
              \end{aligned}
    \end{equation*}
    For finding the bound 
    \begin{equation*}
        \begin{aligned}
            \sup_{t \in I} H_d(_a^\rho\mathfrak{D}^\nu F(t),\{0\})\le& \frac{(\rho+1)^{1-\nu}}{\Gamma(\nu)}  \sup_{t \in I}\sup_{f \in \mathfrak{I}}\int_a^t |(t^{\rho+1}-x^{\rho+1})^{\nu-1}x^{\rho}||f(x)|dx \\
            \le&M\frac{(\rho+1)^{1-\nu}}{\Gamma(\nu)}\int_a^t |(t^{\rho+1}-x^{\rho+1})^{\nu-1}x^{\rho}|dx\\
            \le& M\frac{(\rho+1)^{1-\nu}(\rho+1)^{-1}}{\nu\Gamma(\nu)} (t^{\rho+1}-x^{\rho+1})^{\nu}\Big|_a^t\\
            =& M\frac{(\rho+1)^{-\nu}}{\Gamma(\nu+1)} (t^{\rho+1}-a^{\rho+1})^{\nu}
        \end{aligned}
    \end{equation*}
    From this we can conclude that \[\sup_{t \in I} H_d(_a^\rho\mathfrak{D}^\nu F(t),\{0\})\le M\frac{(\rho+1)^{-\nu}}{\Gamma(\nu+1)} (b^{\rho+1}-a^{\rho+1})^{\nu}.\] 
    With this the boundedness of Katugampola integral is proved.
 \end{proof}
 Throught the paper, we denote the notation of $\mathcal{I}=[a,b]$ where $0\leq a<b.$
 \begin{theorem} \label{convex}
     Let the set-valued map $F:\mathcal{I}\to\mathbb{R}$ be such that $F(x)$ is convex for every $x\in \mathcal{I}$. Then for every $\nu > 0$ and $ \rho \neq -1 $ the Katugampola fractional integral of F preserves convexity, that is, $_a^\rho\mathfrak{D}^\nu F(t)$ is convex for all $t\in \mathcal{I}.$
 \end{theorem}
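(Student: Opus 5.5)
The plan is to argue directly from Definition \ref{K1}: convexity of $_a^\rho\mathfrak{D}^\nu F(t)$ will follow from convexity of the set $\mathfrak{I}$ of integrable selections, because the map $f \mapsto {}_a^\rho\mathfrak{D}^\nu f(t)$ is linear in $f$. Fix $t\in\mathcal{I}$, two points $y_1,y_2\in {}_a^\rho\mathfrak{D}^\nu F(t)$ and $\lambda\in[0,1]$. By definition there are $f_1,f_2\in\mathfrak{I}$ with
\[
y_i=\frac{(\rho+1)^{1-\nu}}{\Gamma(\nu)}\int_a^t(t^{\rho+1}-x^{\rho+1})^{\nu-1}x^{\rho}f_i(x)\,dx,\qquad i=1,2 .
\]
If the set is empty, or reduces to a point, there is nothing to prove.

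The first step is to show that $g=\lambda f_1+(1-\lambda)f_2$ lies in $\mathfrak{I}$.

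\begin{itemize}
\item \emph{Measurability and integrability.} $g$ is measurable and belongs to $L^1(\mathcal{I})$, being a linear combination of such functions.
\item \emph{Selection property.} There are null sets $N_1,N_2$ with $f_i(x)\in F(x)$ for $x\notin N_i$. For $x\notin N_1\cup N_2$, convexity of $F(x)$ gives $g(x)=\lambda f_1(x)+(1-\lambda)f_2(x)\in F(x)$. Since $N_1\cup N_2$ is null, $g$ is a selection of $F$.
\end{itemize}

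The second step uses linearity of the Lebesgue integral. Provided each integral exists, we get
\[
\frac{(\rho+1)^{1-\nu}}{\Gamma(\nu)}\int_a^t(t^{\rho+1}-x^{\rho+1})^{\nu-1}x^{\rho}g(x)\,dx=\lambda y_1+(1-\lambda)y_2 .
\]
Hence $\lambda y_1+(1-\lambda)y_2\in {}_a^\rho\mathfrak{D}^\nu F(t)$, which is the required convexity.

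The only delicate point is that the integral for $g$ is well defined, which is needed to apply linearity. The kernel $(t^{\rho+1}-x^{\rho+1})^{\nu-1}x^{\rho}$ is not bounded in general, so $f\in L^1$ alone does not guarantee integrability of the product. I would handle this by adopting the convention implicit in Definition \ref{K1}: only selections for which the integral exists contribute elements. Then $y_i$ finite means $k|f_i|$ is integrable, where $k$ is the kernel, and $|k g|\le \lambda|k f_1|+(1-\lambda)|k f_2|$ shows that $kg$ is integrable as well.

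Alternatively, one can work under the hypotheses of Theorem \ref{bounded}, namely $F$ integrably bounded with a bounded dominating function. In that case the kernel integral computed there is finite, namely $(t^{\rho+1}-a^{\rho+1})^{\nu}/((\rho+1)\nu)$, so every selection yields a finite integral. I expect this well-definedness issue to be the main obstacle; the rest is immediate.
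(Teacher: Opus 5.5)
Your proof is correct and follows the same route as the paper. In both, the convex combination $\lambda f_1+(1-\lambda)f_2$ of two integrable selections is again an integrable selection by convexity of $F(x)$, and linearity of the integral then places $\lambda y_1+(1-\lambda)y_2$ in ${}_a^\rho\mathfrak{D}^\nu F(t)$. You also handle two points the paper's proof skips: the union of the exceptional null sets, and finiteness of the kernel-weighted integral when the kernel is unbounded (for $\nu<1$, or for $-1<\rho<0$ with $a=0$).
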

 \begin{proof}
 Let $t \in \mathcal{I},$ if $_a^\rho\mathfrak{D}^\nu F(t)$ is empty, we are done. If not, then for any $ s_1, s_2 \in _a^\rho\mathfrak{D}^\nu F(t),$ then there exist integrable selections $f_1,f_2$ such that 
 \[ s_i=\frac{(\rho+1)^{1-\nu}}{\Gamma(\nu)}\int_a^t(t^{\rho+1}-x^{\rho+1})^{\nu-1}x^{\rho}f_i(x)dx,~~i=1,2\]  
 Let $ \lambda \in [0,1].$ And consider the convex combination $\lambda s_1+(1-\lambda)s_2.$ From this we get the
 \begin{equation*}
     \begin{aligned}
  \lambda s_1&+(1-\lambda)s_2\\=&\lambda\frac{(\rho+1)^{1-\nu}}{\Gamma(\nu)}\int_a^t(t^{\rho+1}-x^{\rho+1})^{\nu-1}x^{\rho}f_1(x)dx+(1-\lambda)\frac{(\rho+1)^{1-\nu}}{\Gamma(\nu)}\int_a^t(t^{\rho+1}-x^{\rho+1})^{\nu-1}x^{\rho}f_2(x)dx
  \end{aligned}
 \end{equation*}
 Since $F(t)$ is a convex for each $t,$ therefore $f(t)=\lambda f_1(t)+(1-\lambda)f_2(t)$ is also an integral selection for $F(t).$ Therefore, by linearity of the Lebesgue integral
 \begin{equation*}
     \begin{aligned}
    \lambda s_1+(1-\lambda)s_2=\frac{(\rho+1)^{1-\nu}}{\Gamma(\nu)}\int_a^t(t^{\rho+1}-x^{\rho+1})^{\nu-1}x^{\rho}f(x)dx \in _a^\rho\mathfrak{D}^\nu F(t)
     \end{aligned}
 \end{equation*}
 Hence, every convex combination of elements of $_a^\rho\mathfrak{D}^\nu F(t)$ remains in the set. This proves that $_a^\rho\mathfrak{D}^\nu F(t)$ is convex.
 \end{proof}
 \begin{theorem} \label{compact,continuity}
    Let the set valued mapping $F:\mathcal{I} \rightrightarrows \mathbb{R}$ with non-empty compact values, Borel measurable and integrably bounded on $\mathcal{I}.$ Then, for each $t\in \mathcal{I},$ the set valued Katugampola integral is non empty and compact and the mapping  to $_a^\rho\mathfrak{D}^\nu F(t)$ preserves the continuity on $\mathcal{I}$, for $\rho \neq -1, \nu >0,$ in the Hausdorff metric $H_d$. 
\end{theorem}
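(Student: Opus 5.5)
Non-emptiness comes first and is the same as in Theorem \ref{bounded}. Since $F$ is Borel measurable with nonempty closed values, the measurable selection theorem gives a measurable $f$ with $f(t)\in F(t)$ a.e. Integrable boundedness, $|f|\le m$, then makes $f$ integrable, so $f\in\mathfrak{I}$ and $_a^\rho\mathfrak{D}^\nu F(t)\neq\emptyset$. Throughout we work with $\rho>-1$ and $0\le a$, so that the weight $x^\rho$ is nonnegative and integrable on $[a,b]$. We also assume $m$ is bounded, say $\|m\|_\infty\le M$, as was done in the proof of Theorem \ref{bounded}; the case $\nu<1$ needs this, or an appropriate integrability of $m$ against the singular kernel.

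\textbf{Compactness.} Boundedness of $_a^\rho\mathfrak{D}^\nu F(t)$ is already Theorem \ref{bounded}, so it remains to prove closedness. Fix $t$ and write $k_t(x)=\frac{(\rho+1)^{1-\nu}}{\Gamma(\nu)}(t^{\rho+1}-x^{\rho+1})^{\nu-1}x^\rho\mathbf 1_{[a,t]}(x)$, which lies in $L^1$ because $\nu>0$. The set $_a^\rho\mathfrak{D}^\nu F(t)$ is the image of $\mathfrak{I}$ under the linear functional $f\mapsto\int k_t f$. The plan is to pass to convex hulls. By Lyapunov's convexity theorem, or the standard Aumann argument, the integral of $k_t F$ coincides with the integral of $k_t\,\mathrm{co}F$. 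The set $\mathfrak{I}_{\mathrm{co}F}$ is convex, bounded by $m$, and closed in $L^1$, hence weakly compact by Dunford--Pettis. Since $k_t\in L^\infty$ only when $\nu\ge1$, for $\nu<1$ we instead approximate: truncate $k_t$ at level $N$ and control the tail with $\int_{\{k_t>N\}}k_t\,m\to0$. The image of a weakly compact set under the weakly continuous functional is compact, which gives compactness of $_a^\rho\mathfrak{D}^\nu F(t)$. The main obstacle is this closedness step when $F$ is not convex-valued; I would rely on the nonatomicity of Lebesgue measure (Lyapunov) to identify the set with its convex counterpart.

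\textbf{Continuity.} Take $t<s$ in $\mathcal I$. For $y=\int k_t f\in {}_a^\rho\mathfrak{D}^\nu F(t)$, the same selection $f$ gives $z=\int k_s f\in {}_a^\rho\mathfrak{D}^\nu F(s)$, and the symmetric pairing works the other way. Hence
\[
H_d\big({}_a^\rho\mathfrak{D}^\nu F(t),{}_a^\rho\mathfrak{D}^\nu F(s)\big)\le \sup_{f\in\mathfrak I}\Big|\int (k_s-k_t)f\Big|\le M\int_a^b|k_s(x)-k_t(x)|\,dx .
\]
It remains to show $\|k_s-k_t\|_{L^1}\to0$ as $s\to t$. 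One way is continuity of translation-type families in $L^1$: $k_s\to k_t$ pointwise a.e., and $\int k_s=\frac{(\rho+1)^{-\nu}}{\Gamma(\nu+1)}(s^{\rho+1}-a^{\rho+1})^\nu$ is continuous in $s$, so Scheffé's lemma yields $L^1$ convergence. Alternatively, one can compute directly: split into $\int_t^s k_s\,dx=\frac{(\rho+1)^{-\nu}}{\Gamma(\nu+1)}(s^{\rho+1}-t^{\rho+1})^\nu$ and $\int_a^t|k_s-k_t|\,dx$. In the second term the difference has a constant sign depending on whether $\nu\gtrless1$, so it telescopes into expressions of the form $|(s^{\rho+1}-a^{\rho+1})^\nu-(t^{\rho+1}-a^{\rho+1})^\nu-(s^{\rho+1}-t^{\rho+1})^\nu|$. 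Either way, the result tends to $0$ as $s\to t$, which gives continuity in $H_d$.
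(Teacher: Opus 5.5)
Your argument is correct under the hypotheses you add ($\rho>-1$ and $m$ bounded), and it differs from the paper mainly in the closedness step.

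\textbf{Closedness.} The paper takes $x_n=\int k_t f_n\to x$, extracts a weakly convergent subsequence $f_{n_j}\rightharpoonup g$ in $L^1$ by Dunford--Pettis, and asserts $\int k_t g\in{}_a^\rho\mathfrak{D}^\nu F(t)$. This silently assumes two things:
\begin{enumerate}
\item that $g$ is again a selection of $F$, which is false for nonconvex values (for $F\equiv\{-1,1\}$ the Rademacher functions converge weakly to $0\notin F(x)$);
\item that one may pass to the limit in $\int k_t f_{n_j}$, which is immediate only when $k_t\in L^\infty$.
\end{enumerate}
Your detour through $\mathfrak{I}_{\mathrm{co}F}$, the truncation of $k_t$, and Lyapunov's theorem repairs exactly these two points.

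In $\mathbb{R}$ your argument can be shortened. Write $\underline f=\min F$ and $\overline f=\max F$. By monotonicity, the image of $\mathfrak{I}_{\mathrm{co}F}$ is $[\int k_t\underline f,\int k_t\overline f]$, as in the proof of Theorem \ref{BoundVar}. Sierpi\'nski's theorem for the nonatomic measure $k_t(\overline f-\underline f)\,dx$ then gives, for each point of this interval, a set $A$ such that the selection $\underline f\mathbf 1_A+\overline f\mathbf 1_{\mathcal I\setminus A}$ of $F$ attains it. So the set is a compact interval even without convex values.

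\textbf{Continuity.} You use the same pairing of selections as the paper. The difference is that you replace its dominated-convergence step, whose dominating function depends on $t$, by the uniform bound $M\|k_s-k_t\|_{L^1}$ together with the $L^1$-continuity of $s\mapsto k_s$. Your explicit formula also gives a modulus of continuity independent of the selection.

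\textbf{The added hypotheses.} The boundedness of $m$ is not in the statement, but some such condition is genuinely needed. The paper's claim that $m\in L^1$ alone yields a uniform bound $K$ is false. Take $a=0$, $b=1$, $\rho=0$, $\nu=\tfrac12$ and $F(x)=[0,m(x)]$ with $m(x)=|x-\tfrac12|^{-1/2}$ and $m(\tfrac12)=0$. The selections $f_N=\min(m,N)$ satisfy $\int k_{1/2}f_N\uparrow\frac{1}{\sqrt\pi}\int_0^{1/2}(\tfrac12-x)^{-1}\,dx=\infty$, so the set at $t=\tfrac12$ is unbounded.

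The same failure occurs for $\nu\ge1$ when $-1<\rho<0$ and $a=0$, since $x^\rho$ is then unbounded. So your case distinction ($\nu\ge1$ versus $\nu<1$) for the boundedness of $k_t$ should also account for $\rho<0$ with $a=0$. Your truncation argument already covers every case with $k_t m\in L^1$, so nothing breaks. The restriction $\rho>-1$ is likewise needed for the kernel, as written, to be real and nonnegative.
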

\begin{proof}
     By theorem \ref{bounded} for each $t\in \mathcal{I},$ the set $_a^\rho\mathfrak{D}^\nu F(t)$ is non-empty. Let $\mathfrak{I}$ be the collection of integrable selections of F. By integrably boundedness, there exists $m_1:\mathcal{I}\to \mathbb{R}$ such that $\sup\{ |x|:x\in F(u)\}\le m_1(t)$ for a.e. $t\in \mathcal{I}.$ For each integrable selection $f,$ we have $|f(t)|\le m_1(t)$ for a.e. $t \in \mathcal{I}.$ From the definition \ref{K1} the set-valued Katugampola is 
     \begin{equation*}
    \begin{aligned}
        _a^\rho\mathfrak{D}^\nu F(t)=\bigg\{\frac{(\rho+1)^{1-\nu}}{\Gamma(\nu)}\int_a^t(t^{\rho+1}-x^{\rho+1})^{\nu-1}x^{\rho}f(x)dx:f\in\mathfrak{I}\bigg\}
    \end{aligned}
\end{equation*}
For any $x\in \mathcal{I}$ we can have the 
\begin{equation*}
    \begin{aligned}
      \Bigg|  \frac{(\rho+1)^{1-\nu}}{\Gamma(\nu)}\int_a^t(t^{\rho+1}-x^{\rho+1})^{\nu-1}x^{\rho}f(x)dx \Bigg| &\le \frac{(\rho+1)^{1-\nu}}{\Gamma(\nu)}\int_a^t(t^{\rho+1}-x^{\rho+1})^{\nu-1}x^{\rho} |f(x)|dx \\ &\le \frac{(\rho+1)^{1-\nu}}{\Gamma(\nu)}\int_a^t(t^{\rho+1}-x^{\rho+1})^{\nu-1}x^{\rho} m_1(x)dx
    \end{aligned}
\end{equation*}
Since we have the condition of $\nu>0, ~~\rho\neq-1$ and $m \in L^1(\mathcal{I}).$ Therefore there exist $K>0$ such that it bounds it is for all $t \in \mathcal{I}$ and for all $f\in \mathfrak{I}.$ Thus, $_a^\rho\mathfrak{D}^\nu F(t)$ is bounded for all $t\in \mathcal{I}.$ Now, to check the closure of  $_a^\rho\mathfrak{D}^\nu F(t).$ Let $\{x_n\} \in _a^\rho\mathfrak{D}^\nu F(t)$ is a converging sequence to some $x\in \mathbb{R}.$ Then there exist a integrable selection $f_n$ of F such that 
\[ x_n= \frac{(\rho+1)^{1-\nu}}{\Gamma(\nu)}\int_a^t(t^{\rho+1}-x^{\rho+1})^{\nu-1}x^{\rho}f_n(x)dx. \] 
As $F$ is integrably bounded, therefore each of $f_n$ is integrably bounded, such that $|f_n(t)|\le m_2(t)$ for a.e. $t\in \mathcal{I}$ and $n\in \mathbb{N}.$ By the Dunford-Pettis Theorem, there exist a subsequence $g_{m}$ and a function $g \in L^1$ such that
\[
g_{m} \rightharpoonup g
\quad \text{weakly in } L^1.
\]
Hence,
\begin{equation*}
    \begin{aligned}
 \int_a^t(t^{\rho+1}-x^{\rho+1})^{\nu-1}x^{\rho}g_m(x)dx &\to \int_a^t(t^{\rho+1}-x^{\rho+1})^{\nu-1}x^{\rho}g(x)dx \\
 \frac{(\rho+1)^{1-\nu}}{\Gamma(\nu)}\int_a^t(t^{\rho+1}-x^{\rho+1})^{\nu-1}x^{\rho}g_m(x)dx & \to \frac{(\rho+1)^{1-\nu}}{\Gamma(\nu)}\int_a^t(t^{\rho+1}-x^{\rho+1})^{\nu-1}x^{\rho}g(x)dx =y(t)
  \end{aligned}
\end{equation*}
Since $y(t) \in ~_a^\rho\mathfrak{D}^\nu F(t)$ for every $t\in \mathcal{I}$. Therefore, $_a^\rho\mathfrak{D}^\nu F(t)$ is closed and together with the boundedness is compact. Now we show the continuity of $_a^\rho\mathfrak{D}^\nu F.$ Let $t,p \in \mathcal{I}$ and $f \in \mathfrak{I}.$ Then  
\begin{equation*}
    \begin{aligned}
\frac{(\rho+1)^{1-\nu}}{\Gamma(\nu)}\int_a^t(t^{\rho+1}&-x^{\rho+1})^{\nu-1}x^{\rho}f(x)dx-\frac{(\rho+1)^{1-\nu}}{\Gamma(\nu)}\int_a^p(p^{\rho+1}-x^{\rho+1})^{\nu-1}x^{\rho}f(x)dx \\ &=\frac{(\rho+1)^{1-\nu}}{\Gamma(\nu)}\bigg( \int_a^p\big((t^{\rho+1}-x^{\rho+1})^{\nu-1}-(p^{\rho+1}-x^{\rho+1})^{\nu-1}\big)x^{\rho}f(x)dx \\ & \hspace{15em} + \int_p^t(t^{\rho+1}-x^{\rho+1})^{\nu-1}x^{\rho}f(x)dx\bigg)
   \end{aligned}
\end{equation*}
By the boundedness of $f(t)\le m_1(t).$ Now,
\begin{equation*}
    \begin{aligned}
        \bigg| \frac{(\rho+1)^{1-\nu}}{\Gamma(\nu)}\bigg( \int_a^p\big((t^{\rho+1}-x^{\rho+1})^{\nu-1}-(p^{\rho+1}-x^{\rho+1})^{\nu-1}\big)x^{\rho}f(x)dx  + \int_p^t(t^{\rho+1}-x^{\rho+1})^{\nu-1}x^{\rho}f(x)dx \bigg| \\ \le \frac{(\rho+1)^{1-\nu}}{\Gamma(\nu)} \int_a^p\big|(t^{\rho+1}-x^{\rho+1})^{\nu-1}-(p^{\rho+1}-x^{\rho+1})^{\nu-1}\big|x^{\rho}f(x)dx\\  + \frac{(\rho+1)^{1-\nu}}{\Gamma(\nu)}\int_p^t(t^{\rho+1}-x^{\rho+1})^{\nu-1}x^{\rho}f(x)dx 
    \end{aligned}
\end{equation*}
Let the term on right in the above equation be denoted by $ Z(t,p).$ We now show that $Z(t,p)\to 0$ as $t\to p.$ Since $p \in \mathcal{I},$ for a.e. $x \in [a,p)$ 
\[(t^{\rho+1}-x^{\rho+1})^{\nu-1} \to (p^{\rho+1}-x^{\rho+1})^{\nu-1} \text{ as } t  \to p \]
and as $ \rho \neq -1 $ and $ \nu > 0$ then integral 
\begin{equation*}
    \begin{aligned}
 \int_a^p\big|(t^{\rho+1}-x^{\rho+1})^{\nu-1}-&(p^{\rho+1}-x^{\rho+1})^{\nu-1}\big|x^{\rho}f(x)dx \\ & \le \int_a^p\big((t^{\rho+1}-x^{\rho+1})^{\nu-1}+(p^{\rho+1}-x^{\rho+1})^{\nu-1}\big)x^{\rho } m_1(x)dx  
  \end{aligned}
\end{equation*}
and since $ \big((t^{\rho+1}-x^{\rho+1})^{\nu-1}+(p^{\rho+1}-x^{\rho+1})^{\nu-1}\big) \in L^1(\mathcal{I})$ and therefore we can conclude Lebesgue's dominated convergence theorem 
\begin{equation*}
    \begin{aligned}
 \int_a^p\big|(t^{\rho+1}-x^{\rho+1})^{\nu-1}-&(p^{\rho+1}-x^{\rho+1})^{\nu-1}\big|x^{\rho}m_1(x)dx \to 0 \text{ as } t\to p.
  \end{aligned}
\end{equation*}
Now, for the other part of the integral 
\begin{equation*}
    \begin{aligned}
       0\le \int_p^t(t^{\rho+1}-x^{\rho+1})^{\nu-1}x^{\rho}f(x)dx \le  \int_p^t(t^{\rho+1}-x^{\rho+1})^{\nu-1}x^{\rho}m_1(x) dx \to 0 \text{ as } t\to p 
    \end{aligned}
\end{equation*}
because the integration converges to the point which is $0.$ Take any point $u\in _a^\rho\mathfrak{D}^\nu F(t),$ such that there is some $f \in \mathfrak{I}$ for which the 
\begin{equation*}
    \begin{aligned}
u= \frac{(\rho+1)^{1-\nu}}{\Gamma(\nu)}\int_a^t(t^{\rho+1}&-x^{\rho+1})^{\nu-1}x^{\rho}f(x)dx
 \end{aligned}
\end{equation*}
Take $v\in _a^\rho\mathfrak{D}^\nu F(p)$ such that its integral is defined for the same $f\in \mathfrak{I}$ as that taken for the point $u.$ By the inequality proved in above $ |u-v| \le Z(t,p),$ and hence the 
      \begin{equation*}
          \begin{aligned}
              \sup_{u\in  _a^\rho\mathfrak{D}^\nu F(t)} \inf_{v\in _a^\rho\mathfrak{D}^\nu F(p)}|u-v| \le Z(t,p)
          \end{aligned}
      \end{equation*}
By the definition of Hausdorff metric, we have 
\[ H_d(_a^\rho\mathfrak{D}^\nu F(p),_a^\rho\mathfrak{D}^\nu F(t)) \le Z(p,t) \] for all $t,p \in \mathcal{I}.$ Since, $Z(t,p)\to 0$ as the $t\to p.$ This shows the mapping $ _a^\rho\mathfrak{D}^\nu F(t)$ on $\mathcal{I}$ is continuous with the metric $H_d.$ This completes the proof. 
\end{proof}
\section{Bounded Variation and Lipschitz Continuity of the  Set-Valued Katugampola Integral} \label{sec4}
 \begin{theorem} \label{BoundVar}
     Let the set-valued mapping $F:\mathcal{I} \rightrightarrows \mathbb{R} $ with non empty compact convex values, Borel measurable and integrably bounded on $\mathcal{I}.$ Assume that $F$ is of bounded variation on $\mathcal{I}$ with respect to $H_d$ and that $\rho \neq -1, \nu >1$ Then, Katugampola integral $t\in \mathcal{I},$ the set $_a^\rho\mathfrak{D}^\nu F(t)$ is nonempty, compact and convex, and the mapping  to $_a^\rho\mathfrak{D}^\nu F(t)$ preserves the bounded variation on $\mathcal{I}$ in Hausdorff metric, $H_d.$
 \end{theorem}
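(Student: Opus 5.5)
The first part of the statement needs no new argument. Nonemptiness and compactness of $_a^\rho\mathfrak{D}^\nu F(t)$ for each $t\in\mathcal{I}$ follow from Theorem \ref{compact,continuity}, and convexity follows from Theorem \ref{convex}. The substance is the bounded variation claim.

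For that I would follow the selection-wise comparison used in the continuity part of Theorem \ref{compact,continuity}. Fix a partition $a\le t_0<t_1<\dots<t_n\le b$. For a point $u\in{}_a^\rho\mathfrak{D}^\nu F(t_i)$ generated by a selection $f\in\mathfrak{I}$, I compare it with the point of ${}_a^\rho\mathfrak{D}^\nu F(t_{i-1})$ generated by the same $f$. This gives
\[
H_d\big({}_a^\rho\mathfrak{D}^\nu F(t_i),{}_a^\rho\mathfrak{D}^\nu F(t_{i-1})\big)\le \frac{(\rho+1)^{1-\nu}}{\Gamma(\nu)}\sup_{f\in\mathfrak{I}}\Big|\int_a^{t_i}K(t_i,x)f(x)\,dx-\int_a^{t_{i-1}}K(t_{i-1},x)f(x)\,dx\Big|,
\]
where $K(t,x)=(t^{\rho+1}-x^{\rho+1})^{\nu-1}x^{\rho}$. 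Using $|f|\le m$, I split the difference exactly as in Theorem \ref{compact,continuity}:
\[
\int_a^{t_{i-1}}\big|K(t_i,x)-K(t_{i-1},x)\big|m(x)\,dx+\int_{t_{i-1}}^{t_i}K(t_i,x)m(x)\,dx .
\]
The hypothesis $\nu>1$ is what makes this tractable. For $\nu>1$ and $x\le t$, the map $t\mapsto (t^{\rho+1}-x^{\rho+1})^{\nu-1}$ is nondecreasing in $t$ when $\rho>-1$, since $t^{\rho+1}$ is increasing. So for each fixed $x$ the absolute value drops, and the first term telescopes over $i$.

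Summing over $i$, the first terms give
\[
\sum_i\int_a^{b}\mathbf 1_{x<t_{i-1}}\big(K(t_i,x)-K(t_{i-1},x)\big)m(x)\,dx\le\int_a^b K(b,x)m(x)\,dx ,
\]
because for fixed $x$ the inner sum telescopes and is bounded by $K(b,x)$. For the second terms, $K(t_i,x)\le K(b,x)$ when $\nu\ge1$, so they sum to at most $\int_a^b K(b,x)m(x)\,dx$.

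This yields
\[
V_a^b\big({}_a^\rho\mathfrak{D}^\nu F\big)\le \frac{2(\rho+1)^{1-\nu}}{\Gamma(\nu)}\int_a^b (b^{\rho+1}-x^{\rho+1})^{\nu-1}x^\rho m(x)\,dx .
\]
If $m$ is bounded by $M$, as in Theorem \ref{bounded}, this is at most
\[
\frac{2M(\rho+1)^{-\nu}}{\Gamma(\nu+1)}(b^{\rho+1}-a^{\rho+1})^{\nu}<\infty .
\]
Note that this argument uses only integrable boundedness. The bounded variation of $F$ is not even needed, and could alternatively enter through a bound $\sup_t H_d(F(t),\{0\})\le H_d(F(a),\{0\})+V_a^b(F)$, which gives a uniform bound $M$ on $m$.

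The step I expect to be the main obstacle is the monotonicity and finiteness of the kernel. This covers two issues.

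\textbf{The case $\rho<-1$.} Here $t^{\rho+1}$ is decreasing, so the sign conventions flip. I would handle it by replacing $(t^{\rho+1}-x^{\rho+1})$ with its absolute value, which matches the Katugampola normalization. The same telescoping argument then goes through with $|K|$.

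\textbf{Integrability of $x^\rho$ near $0$ when $a=0$.} This is needed to ensure $\int_a^b K(b,x)m(x)\,dx<\infty$. When $\rho>-1$ it holds because $\int_0^b x^\rho\,dx<\infty$ and $m$ is bounded. Otherwise I would assume $a>0$, or restrict to the range where the kernel is integrable, as implicitly done in Theorem \ref{bounded}.
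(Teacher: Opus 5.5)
Your proof is correct for $\rho>-1$, the only range in which the operator is real-valued as written, but it takes a different route from the paper.

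\textbf{The paper's route.} It exploits the one-dimensional convex structure:
\begin{enumerate}
\item It writes $F(t)=[\underline f(t),\overline f(t)]$ and uses the bounded variation of $F$ to get that $\underline f,\overline f$ are of bounded variation, hence bounded.
\item By positivity of the kernel and Theorem \ref{convex}, it shows ${}_a^\rho\mathfrak{D}^\nu F(t)=[{}_a^\rho\mathfrak{D}^\nu\underline f(t),{}_a^\rho\mathfrak{D}^\nu\overline f(t)]$.
\item It proves each endpoint function Lipschitz by differentiating under the integral sign. This is where $\nu>1$ enters: it kills the boundary term and keeps $(t^{\rho+1}-x^{\rho+1})^{\nu-2}$ integrable.
\item It concludes because $H_d$ between intervals is the larger endpoint difference.
\end{enumerate}

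\textbf{Your route.} You reuse the same-selection comparison from the continuity proof. You use $\nu>1$ through the monotonicity of $t\mapsto K(t,x)$, so the variation sum telescopes pointwise in $x$.

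\textbf{What your route buys.} It needs neither convexity, nor the interval representation, nor differentiation under the integral. It gives the explicit bound $\frac{2(\rho+1)^{1-\nu}}{\Gamma(\nu)}\int_a^b K(b,x)m(x)\,dx$. It also shows that the bounded variation of $F$ serves only to make the dominating function bounded. Finally, it remains valid for $a=0$, $-1<\rho<0$. There the paper's estimate $t^\rho\le b^\rho$ is false and the endpoint integrals need not be Lipschitz: for $f\equiv 1$ the derivative is a multiple of $t^{(\rho+1)\nu-1}$, which is unbounded when $(\rho+1)\nu<1$. They are still of bounded variation, which your bound captures.

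\textbf{What the paper's route buys.} It gives more structure: an explicit description of the integral as an interval whose endpoints are the Katugampola integrals of $\inf F$ and $\sup F$. These endpoints are Lipschitz when $\rho\ge 0$ or $a>0$.

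\textbf{The case $\rho<-1$.} Your absolute-value device changes the operator, and the factor $(\rho+1)^{1-\nu}$ is not real there either. So it does not prove the statement in that range, but the paper's proof does not cover $\rho<-1$ either.
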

 \begin{proof}
     Since $F(t)$ is non-empty, compact, Borel-measurable and integrably bounded, by Theorem \ref{compact,continuity}, for each $t\in \mathcal{I},$ the set $_a^\rho\mathfrak{D}^\nu F(t)$ is non-empty, compact and the mapping $_a^\rho\mathfrak{D}^\nu F(t)$ is continuous on $\mathcal{I}$ with respect to $H_d.$ Furthermore, since $F(t)$ is convex,Theorem \ref{convex} gives us that the mapping $_a^\rho\mathfrak{D}^\nu F(t)$ is convex. Thus, it is sufficient to verify that the mapping $_a^\rho\mathfrak{D}^\nu F(t)$ is of bounded variation. Also, for each $t\in \mathcal{I},$ the set $ F(t)$ nonempty, compact and convex subset of $\mathbb{R}$ and hence it must be a closed interval. 
     \[ \underline{f}(t):= \inf F(t)  \text{ and } \overline{f}(t):=\sup F(t) \]
    The set valued $F(t)$ can be written as 
    \[ F(t)= [\underline{f}(t),\overline{f}(t)], \text{ for all } t\in \mathcal{I} \] 
  Consider a partition $\Gamma=\{0\le a=t_1 <t_2 <\ldots < t_n=b \}$ of interval $\mathcal{I}.$ Now, the Hausdorff distance is measured as the 
  \begin{equation*}
      \begin{aligned}
  H_d(F(t_i),F(t_{i-1}))=& H_d([\underline{f}(t_i),\overline{f}(t_i)],[\underline{f}(t_{i-1}),\overline{f}(t_{i-1})])\\
  =&\max \big\{ |\underline{f}(t_i)-\underline{f}(t_{i-1})|, |\overline{f}(t_{i})-\overline{f}(t_{i-1})|\big\}.
   \end{aligned}
  \end{equation*}
  From this 
   \begin{equation*}
       \begin{aligned}
           \sum_{i=1}^{n}|\overline{f}(t_{i})-\overline{f}(t_{i-1})|\le& \sum_{i=1}^nH_d(F(t_i),F(t_{i-1}))  \text{ and also, } \\
           \sum_{i=1}^{n}|\underline{f}(t_i)-\underline{f}(t_{i-1})|\le& \sum_{i=1}^nH_d(F(t_i),F(t_{i-1})).
       \end{aligned}
   \end{equation*}
  If we take the supremum on all partitions $\Gamma,$ we get
  \begin{equation*}
      \begin{aligned}
          V_a^b(\underline{f})\le V_a^b(F)< \infty  \text{ and }  V_a^b(\overline{f})\le V_a^b(F) < \infty
      \end{aligned}
  \end{equation*}
as $F$ is of bounded variation. From the bounded variation of functions $\underline{f}$ and $ \overline{f}$ on $I,$ these are also bounded on $\mathcal{I}.$ For $t\in I$ define 
\begin{equation*}
    \begin{aligned}
        (_a^\rho\mathfrak{D}^\nu \underline{f})(t)=\frac{(\rho+1)^{1-\nu}}{\Gamma(\nu)}\int_a^t(t^{\rho+1}-x^{\rho+1})^{\nu-1}x^{\rho}\underline{f}(x)dx.
    \end{aligned}
\end{equation*}
Similarly, 
\begin{equation*}
    \begin{aligned}
        (_a^\rho\mathfrak{D}^\nu \overline{f})(t)=\frac{(\rho+1)^{1-\nu}}{\Gamma(\nu)}\int_a^t(t^{\rho+1}-x^{\rho+1})^{\nu-1}x^{\rho}\overline{f}(x)dx.
    \end{aligned}
\end{equation*}
Because $\underline{f}$ and $\overline{f}$ are both bounded and integrable, thus the both operators $_a^\rho\mathfrak{D}^\nu \underline{f}$ and $_a^\rho\mathfrak{D}^\nu \overline{f}$ are well defined. We now assert that each of these operator is Lipschitz and therefore, is of bounded variation on $\mathcal{I}$. Now, as the $\underline{f}$ is bounded, there exist a $M>0$ such that $\underline{f}(t)\le M$ for all $t\in \mathcal{I}. $ Also, as for all $t \in (a,b]$ the operator $_a^\rho\mathfrak{D}^\nu(t)$ is differentiable as the following 
\begin{equation*}
    \begin{aligned}
        (_a^\rho\mathfrak{D}^\nu)' \underline{f}(t)=\frac{(\rho+1)^{2-\nu}(\nu-1)}{\Gamma(\nu)}\int_a^tt^\rho(t^{\rho+1}-x^{\rho+1})^{\nu-2}x^{\rho}\underline{f}(x)dx.
    \end{aligned}
\end{equation*}
Hence, 
\begin{equation*}
    \begin{aligned}
        |(_a^\rho\mathfrak{D}^\nu)' \underline{f}(t)|&=\frac{(\rho+1)^{2-\nu}(\nu-1)}{\Gamma(\nu)}\int_a^tt^\rho(t^{\rho+1}-x^{\rho+1})^{\nu-2}x^{\rho}|\underline{f}(x)|dx \\
        &\le \frac{(\rho+1)^{2-\nu}(\nu-1)}{\Gamma(\nu)}M\int_a^tt^\rho(t^{\rho+1}-x^{\rho+1})^{\nu-2}x^{\rho}dx 
    \end{aligned}
\end{equation*}
Put $t^{\rho+1}-x^{\rho+1}=u$ then $x^\rho dx=-\frac{du}{\rho+1}$ and now the $\int u^{\nu-2}du=\frac{u^{\nu-1}}{\nu-1}.$ So, we get 
\begin{equation*}
    \begin{aligned}
        |(_a^\rho\mathfrak{D}^\nu)' \underline{f}(t)|&\le  \frac{M(\rho+1)^{2-\nu}(\nu-1)}{\Gamma(\nu)} \frac{t^\rho(t^{\rho+1}-a
        ^{\rho+1})^{(\nu-1)}}{(\rho+1)(\nu-1)}\\
        &\le \frac{M(\rho+1)^{1-\nu}}{\Gamma(\nu)} b^\rho (b^{\rho+1}-a
        ^{\rho+1})^{(\nu-1)}
    \end{aligned}
\end{equation*}
Hence, the $(_a^\rho\mathfrak{D}^\nu)' \underline{f}(t)$ is bounded on interval $(a,b].$ Moreover, as $ t \to a ^+$ can be continuously extends to $[a,b],$ so derivative is bounded on entire interval $\mathcal{I}.$ So the $(_a^\rho\mathfrak{D}^\nu)\underline{f}(t)$ is Lipschitz on $\mathcal{I}$ with Lipschitz constant 
\[ \frac{M(\rho+1)^{-\nu}}{\Gamma(\nu)} b^\rho (b^{\rho+1}-a
        ^{\rho+1})^{(\nu-1)} \]
        Similarly, we can show for the $(_a^\rho\mathfrak{D}^\nu) \overline{f}(t)$ is also Lipschitz with same Lipschitz constant. Thus, both are functions of bounded variation of $\mathcal{I}.$\\
        We now established that for every $t\in \mathcal{I}$
    \begin{equation} \label{interval}
        \begin{aligned}  
     _a^\rho\mathfrak{D}^\nu F(t)=[(_a^\rho\mathfrak{D}^\nu) \underline{f}(t),(_a^\rho\mathfrak{D}^\nu)\overline{f}(t) ]. 
       \end{aligned}
    \end{equation}  
        Let $f$ is a integrable selection of the set-valued mappings $F.$ Then, for each $t\in \mathcal{I},$ it satisfies 
        \[ \underline{f}(t)\le f(t) \le \overline{f}(t).\] 
        Since the kernel \[\frac{(\rho+1)^{1-\nu}(t^{\rho+1}-x^{\rho+1})^{\nu-1}x^{\rho}}{\Gamma(\nu)}\] is non negative for all $x\in [a,t],$ integrating the above inequality yields $ (_a^\rho\mathfrak{D}^\nu) \underline{f}(t) \le _a^\rho\mathfrak{D}^\nu f(t)\le  (_a^\rho\mathfrak{D}^\nu) \overline{f}(t).$ 
        This shows that every element of $ _a^\rho\mathfrak{D}^\nu F(t) $ lies within the  interval of (\ref{interval}). Clearly, both $\underline{f}$ and $\overline{f}$ are selections of $F.$ Therefore, their corresponding fractional integral $(_a^\rho\mathfrak{D}^\nu \underline{f})(t)$ and $(_a^\rho\mathfrak{D}^\nu \overline{f})(t)$ belong to $_a^\rho\mathfrak{D}^\nu F(t).$ Hence the interval in \ref{interval} is contained in $_a^\rho\mathfrak{D}^\nu F(t).$ Combining both the inclusions gives the equality of (\ref{interval}).\\
      Now consider a partition $P=\{a=t_0<t_1<\ldots <t_n=b \}$ of the interval $\mathcal{I}.$ Then for each of $i=1,2\ldots n$ we have 
      \begin{equation*}
          \begin{aligned}
       H_d(_a^\rho\mathfrak{D}^\nu F(t_i),_a^\rho\mathfrak{D}^\nu F(t_{i-1}))=& H_d([_a^\rho\mathfrak{D}^\nu \underline{f}(t_i),_a^\rho\mathfrak{D}^\nu \overline{f}(t_i)],[_a^\rho\mathfrak{D}^\nu \underline{f}(t_{i-1}),_a^\rho\mathfrak{D}^\nu \overline{f}(t_{i-1})]) \\
       =& \max\{|_a^\rho\mathfrak{D}^\nu \underline{f}(t_i)-_a^\rho\mathfrak{D}^\nu \underline{f}(t_{i-1})|,|_a^\rho\mathfrak{D}^\nu \overline{f}(t_i)-_a^\rho\mathfrak{D}^\nu \overline{f}(t_{i-1})| \}
        \end{aligned}
      \end{equation*} 
      Hence, 
      \[ \sum_{i=1}^n H_d(_a^\rho\mathfrak{D}^\nu F(t_i),_a^\rho\mathfrak{D}^\nu F(t_{i-1})) \le \sum_{i=1}^n (|_a^\rho\mathfrak{D}^\nu \underline{f}(t_i)-_a^\rho\mathfrak{D}^\nu \underline{f}(t_{i-1})|+|_a^\rho\mathfrak{D}^\nu \overline{f}(t_i)-_a^\rho\mathfrak{D}^\nu \overline{f}(t_{i-1})| ) \] 
      by taking the supremum over all the partitions $P$ of $\mathcal{I}=[a,b].$ We get 
      \[ V_a^b(_a^\rho\mathfrak{D}^\nu F ) \le V_a^b(_a^\rho\mathfrak{D}^\nu \underline{f}) +V_a^b(_a^\rho\mathfrak{D}^\nu \overline{f})< \infty,\] 
      because both $_a^\rho\mathfrak{D}^\nu \underline{f}$ and $_a^\rho\mathfrak{D}^\nu \overline{f}$ are functions of bounded variations. Therefore, the mapping of the $_a^\rho\mathfrak{D}^\nu F$ is of bounded variation on $\mathcal{I}$ with respect to the Hausdorff metric $H_d.$ This completes the argument.  
 \end{proof}
 \begin{theorem}
   Let the set-valued mapping $F:\mathcal{I} \rightrightarrows \mathbb{R} $ with non empty compact convex values, Borel measurable and integrably bounded on $\mathcal{I}.$ Assume that $F$ is Lipschitz on $\mathcal{I}$ with respect to $H_d$ and that $\rho \neq -1, \nu >1.$ Then, for every $t\in \mathcal{I},$ the Katugampola integral $_a^\rho\mathfrak{D}^\nu F(t)$ is nonempty, compact and convex, and the mapping to $_a^\rho\mathfrak{D}^\nu F(t)$ preserves the Lipschitz on $\mathcal{I}$ in Hausdorff metric, $H_d.$
     \end{theorem}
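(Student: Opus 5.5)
The plan mirrors the proof of Theorem \ref{BoundVar}. First, nonemptiness and compactness of $_a^\rho\mathfrak{D}^\nu F(t)$ come from Theorem \ref{compact,continuity}, and convexity from Theorem \ref{convex}. So only the Lipschitz property in $H_d$ has to be proved.

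Since each $F(t)$ is a nonempty compact convex subset of $\mathbb{R}$, we write $F(t)=[\underline{f}(t),\overline{f}(t)]$. As in Theorem \ref{BoundVar},
\[H_d(F(t_1),F(t_2))=\max\{|\underline{f}(t_1)-\underline{f}(t_2)|,|\overline{f}(t_1)-\overline{f}(t_2)|\}.\]
Hence the Lipschitz bound for $F$ with constant $C_F$ gives that $\underline{f}$ and $\overline{f}$ are Lipschitz with constant $C_F$. In particular they are continuous, hence measurable, bounded by some $M$ (for instance $M=\max\{|\underline{f}(a)|,|\overline{f}(a)|\}+C_F(b-a)$), and they are integrable selections of $F$.

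The key structural fact is the interval representation (\ref{interval}):
\[_a^\rho\mathfrak{D}^\nu F(t)=\big[\,_a^\rho\mathfrak{D}^\nu\underline{f}(t),\,_a^\rho\mathfrak{D}^\nu\overline{f}(t)\big].\]
It follows as before from the nonnegativity of the kernel when $\rho> -1$; the Lipschitz hypothesis does not change that argument. The Hausdorff distance between the integrated sets at $t_1,t_2$ is then the maximum of $|_a^\rho\mathfrak{D}^\nu\underline{f}(t_1)-{}_a^\rho\mathfrak{D}^\nu\underline{f}(t_2)|$ and the corresponding term for $\overline{f}$. It therefore suffices to show that each scalar function $g\mapsto {}_a^\rho\mathfrak{D}^\nu g$, for $g$ bounded by $M$, is Lipschitz on $\mathcal{I}$.

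For this step I would again differentiate under the integral sign for $t\in(a,b]$, which is legitimate since $\nu>1$:
\[(_a^\rho\mathfrak{D}^\nu g)'(t)=\frac{(\rho+1)^{2-\nu}(\nu-1)}{\Gamma(\nu)}\,t^\rho\int_a^t(t^{\rho+1}-x^{\rho+1})^{\nu-2}x^\rho g(x)\,dx.\]
The substitution $u=t^{\rho+1}-x^{\rho+1}$ then gives
\[|(_a^\rho\mathfrak{D}^\nu g)'(t)|\le \frac{M(\rho+1)^{1-\nu}}{\Gamma(\nu)}\,b^\rho\,(b^{\rho+1}-a^{\rho+1})^{\nu-1}=:L.\]
By the mean value theorem, together with continuity at $a$ from Theorem \ref{compact,continuity}, $|{}_a^\rho\mathfrak{D}^\nu g(t_1)-{}_a^\rho\mathfrak{D}^\nu g(t_2)|\le L|t_1-t_2|$. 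Applying this to $g=\underline{f}$ and $g=\overline{f}$ yields
\[H_d\big(_a^\rho\mathfrak{D}^\nu F(t_1),{}_a^\rho\mathfrak{D}^\nu F(t_2)\big)\le L|t_1-t_2|.\]

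The main obstacle is this differentiation step. For $1<\nu<2$ the kernel $(t^{\rho+1}-x^{\rho+1})^{\nu-2}$ is singular at $x=t$, so differentiating under the integral sign needs justification. I would try first to avoid it entirely. Split
\[_a^\rho\mathfrak{D}^\nu g(t_2)-{}_a^\rho\mathfrak{D}^\nu g(t_1)=\int_a^{t_1}\big[(t_2^{\rho+1}-x^{\rho+1})^{\nu-1}-(t_1^{\rho+1}-x^{\rho+1})^{\nu-1}\big]x^\rho g\,dx+\int_{t_1}^{t_2}(t_2^{\rho+1}-x^{\rho+1})^{\nu-1}x^\rho g\,dx.\]
Because $\nu-1>0$, the first bracket is nonnegative. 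Bound $|g|\le M$ and integrate both pieces explicitly with the same substitution. This gives $M$ times the increment of $\frac{1}{\nu(\rho+1)}(t^{\rho+1}-a^{\rho+1})^\nu$, a $C^1$ function on $[a,b]$, which is therefore Lipschitz. A secondary point is the sign of $x^\rho$ and of the kernel when $\rho<-1$ or $a=0$. I would restrict to $\rho>-1$, as the paper implicitly does, noting that the nonnegativity of the kernel is what makes (\ref{interval}) valid.
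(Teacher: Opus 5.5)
Your argument is correct (under an extra hypothesis noted below), but it takes a different route from the paper. The paper never uses convexity or the representation (\ref{interval}) here. It uses the Lipschitz hypothesis only to get a uniform bound $A$ on all values of $F$. It then shows that for every integrable selection $f$ the scalar function $I_f$ is Lipschitz with a constant $K$ independent of $f$. Finally it pairs $u=I_f(t_1)$ with $v=I_f(t_2)$, built from the same selection, which bounds both one-sided Hausdorff excesses by $K|t_1-t_2|$.

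You instead use convexity to pass to the endpoint functions $\underline f,\overline f$. This gives an exact formula for the Hausdorff distance of the integrated sets. Your observation that $\underline f,\overline f$ are $C_F$-Lipschitz makes it immediate that they are integrable selections. The reduction is not actually needed, though. Your scalar estimate holds for every $g$ with $|g|\le M$, so it can be inserted directly into the paper's same-selection pairing. Then neither convexity nor the Lipschitz property of $F$, beyond a uniform bound, is used.

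Where your proposal is genuinely stronger is the scalar step. Your splitting argument uses the monotonicity of $s\mapsto s^{\nu-1}$ and the explicit primitive $\frac{(t^{\rho+1}-a^{\rho+1})^{\nu}}{\nu(\rho+1)}$. This replaces the paper's unjustified differentiation under the integral sign for $1<\nu<2$. It also avoids the paper's final estimate $t^\rho\le b^\rho$, which is false for $-1<\rho<0$.

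One correction: $(t^{\rho+1}-a^{\rho+1})^{\nu}$ is $C^1$ on $[a,b]$ only when $a>0$ or $(\rho+1)\nu\ge 1$; the latter is automatic if $\rho\ge 0$. In the remaining case $a=0$, $-1<\rho<0$, $(\rho+1)\nu<1$, the statement itself is false. For $F\equiv\{1\}$ one gets ${}_0^\rho\mathfrak{D}^\nu F(t)=\{c\,t^{(\rho+1)\nu}\}$ with $c=(\rho+1)^{-\nu}/\Gamma(\nu+1)>0$, which is not Lipschitz at $0$. The paper's proof breaks there as well. So rather than only restricting to $\rho>-1$, you should state the hypothesis $a>0$ or $(\rho+1)\nu\ge 1$ explicitly. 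With it, your proof is complete, and it covers exactly the cases in which the statement holds.
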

\begin{proof}
   We need to show that $_a^\rho\mathfrak{D}^\nu F(t)$ is  Lipschitz on $\mathcal{I}$ as from the Theorem \ref{compact,continuity}, for each $t\in \mathcal{I}$ the set $_a^\rho\mathfrak{D}^\nu F(t)$ is nonempty and compact. As $F$ is Lipschitz on $\mathcal{I}$, let $C_F$ is Lipschitz constant of F, 
   \[ H_d(F(t_1),F(t_2))\le C_F |t_1-t_2| \text{ for all } t_1,t_2 \in \mathcal{I}.\]  
   Let's define the uniform bound on $F,$ for the value of $t_0 \in \mathcal{I}.$ Since $F(t_0)$ is compact, therefore bounded. Define  
\[ B_0:= \sup \{|u| : u \in F(t_0)\} < \infty \] 
Now consider any $t\in I$ and any $u \in F(t).$ By the definition of the Hausdorff distance, there exist some $ v \in F(t_0)$ such that 
\[ |u-v|\le H_d(F(t),F(t_0)).\]
Using the Lipschitz property of $F,$ we obtain 
\[ |u-v|\le H_d(F(t),F(t_0)) \le C_F|t-t_0|\le C_F|b-a|.\]
And we have $|v|\le B_0$ as $v\in F(t_0).$ Therefore
\[ |u| \le |v|+|u-v|\le B_0+C_F(b-a) \]
Let \[ A:= B_0+C_F(b-a). \]
Then it follows that
\[ \sup \{|u|:u \in F(t)\}\le A \text{ for all } t\in \mathcal{I}.\]
As a consequence, any integrable selection $f$ of $F$ satisfies
\begin{equation} \label{f_bound}
    \begin{aligned}
 |f(t)| \le A  \text{ for all } t \in \mathcal{I}. 
  \end{aligned}
\end{equation}
 For such a selection of $f,$ define a function for $t\in \mathcal{I},$ 
 \[ I_f(t):= \frac{(\rho+1)^{1-\nu}}{\Gamma(\nu)}\int_a^t(t^{\rho+1}-x^{\rho+1})^{\nu-1}x^{\rho}f(x)dx \]
 Since $\rho \neq -1, \nu>1$ and the selection of $f$ is bounded, we differentiation under the integral. Thus, for every $t \in (a,b],$ we obtain 
 \[I_f'(t)=\frac{(\rho+1)^{2-\nu}(\nu-1)}{\Gamma(\nu)}\int_a^t t^\rho (t^{\rho+1}-x^{\rho+1})^{\nu-2}x^{\rho}f(x)dx  \]
 Using the estimate in \ref{f_bound}, it follows that
 \begin{equation*}
     \begin{aligned}
         |I_f'(t)| &\le \frac{(\rho+1)^{2-\nu}(\nu-1)}{\Gamma(\nu)}A\int_a^t t^\rho (t^{\rho+1}-x^{\rho+1})^{\nu-2}x^{\rho}dx\\ &= \frac{(\rho+1)^{2-\nu}(\nu-1)}{\Gamma(\nu)}A\bigg[\frac{t^\rho (t^{\rho+1}-a^{\rho+1})^{\nu-1}}{(\rho+1)(\nu-1)} \bigg] \\ 
         &= \frac{A(\rho+1)^{1-\nu}}{\Gamma(\nu)}t^\rho (t^{\rho+1}-a^{\rho+1})^{\nu-1}\\
         &\le \frac{A(\rho+1)^{1-\nu}}{\Gamma(\nu)} b^\rho (b^{\rho+1}-a^{\rho+1})^{\nu-1}
     \end{aligned}
 \end{equation*}
 Therefore, the function $I_f$ is Lipschitz continious on $\mathcal{I},$ with Lipschitz constant $K$ is given by the 
 \[ K:=\frac{A(\rho+1)^{1-\nu}}{\Gamma(\nu)} b^\rho (b^{\rho+1}-a^{\rho+1})^{\nu-1} \] 
 Clearly, this constant does not depend on the particular choice of the selection $f.$ Now take arbitrary point $t_i,t_{i-1}\in \mathcal{I}$ and let $f$ be an integrable selection of $F.$ By the Lipschitz continuity of $I_f$ 
 \[ |I_f(t_i)-I_f(t_{i-1})|\le K|t_i-t_{i-1
 }|\] 
 Choose $u'\in _a^\rho\mathfrak{D}^\nu F(t_i),$ then there exist a selection $f$ such that $u'=I_f(t_i).$ Further, let $v' \in _a^\rho\mathfrak{D}^\nu F(t_{i-1})$  such that $v'=I_f(t_{i-1})$ and consequently 
 \[ |u'-v'| \le K|t_i-t_{i-1}|.\]
 Taking the infimum over all $v'\in _a^\rho\mathfrak{D}^\nu F(t_{i-1})$ and then the supremum over $u'\in _a^\rho\mathfrak{D}^\nu F(t_i)$ gives 
 \[ \sup_{u'\in _a^\rho\mathfrak{D}^\nu F(t_i)} \inf_{v'\in _a^\rho\mathfrak{D}^\nu F(t_{i-1})} |u'-v'| \le K|t_i-t_{i-1}| \]
 Repeating the same argument with $t_i$and $t_{i-1}$ interchanged gives 
 \[ \sup_{v'\in _a^\rho\mathfrak{D}^\nu F(t_{i-1})} \inf_{u'\in _a^\rho\mathfrak{D}^\nu F(t_i)} |u'-v'| \le K|t_i-t_{i-1}| \]
 Combining these two inequalities and using the definition of the Hausdorff metric, we conclude that
 \[ H_d(_a^\rho\mathfrak{D}^\nu F(t_i),_a^\rho\mathfrak{D}^\nu F(t_{i-1}))\le K|t_i-t_{i-1}|.\]
 Thus, the mapping $_a^\rho\mathfrak{D}^\nu F(t)$ is Lipschitz continuous on $\mathcal{I}$ with the Hausdorff distance. This completes the argument. 
 
\end{proof} 
 
\section{On Existence of Regular Selection} \label{sec5}

\begin{theorem}\cite[Theorem 3]{Belov} \label{selec} (Existence of Regular Selection) Let $(X,d)$ be a metric space. If $F:I \rightrightarrows X$ is set-valued mappings which is compact and of bounded variation. If $t \in I$ and $x\in F(t),$ then the $F$ admits the regular selection $f\in \mathcal{F},$ where $\mathcal{F}$ denotes the collection of all mappings from $[a,b]$ to $X.$ And $f$ is a bounded variation mappings such that $f(t)=x$ and $V_a^b(f)\le V_a^f(F).$
\end{theorem}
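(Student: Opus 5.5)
The plan is the classical Belov--Chistyakov argument. It builds selections on finite sets with controlled variation, then passes to a pointwise limit using compactness of the values and a Helly-type selection principle.

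\emph{Step 1 (finite sets).} Let $T=\{t_0<t_1<\dots<t_n\}\subset I$ be a finite set containing the given point $t$. Suppose $t=t_k$. Put $f_T(t_k)=x$. Moving right, choose $f_T(t_{j+1})\in F(t_{j+1})$ nearest to $f_T(t_j)$; this is possible because $F(t_{j+1})$ is compact. Then
\[
d(f_T(t_{j+1}),f_T(t_j))=\operatorname{dist}(f_T(t_j),F(t_{j+1}))\le H_d(F(t_j),F(t_{j+1})).
\]
Do the same moving left. Summing gives $\sum_j d(f_T(t_j),f_T(t_{j-1}))\le V_a^b(F)$. Extend $f_T$ to all of $I$ as a step function, constant between consecutive nodes and taking a value in $F(s)$ at each $s$. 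The extension should not increase variation. The simplest way is to define $f_T$ only on $T$ and handle the remaining points in the limit step.

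\emph{Step 2 (limit).} Let $T_m$ be an increasing sequence of finite sets whose union $D$ is countable, dense, and contains $t$, $a$, $b$. The functions $f_{T_m}$ restricted to $D$ have variation at most $V_a^b(F)$. At each point they take values in the compact set $F(s)$, or in the relatively compact set $\bigcup_s F(s)$, which is totally bounded since $F$ has bounded variation and compact values.

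A diagonal argument gives a subsequence converging pointwise on $D$ to some $f$ with $f(s)\in F(s)$ for $s\in D$, using that $F(s)$ is closed. Lower semicontinuity of variation under pointwise convergence gives $V(f|_D)\le V_a^b(F)$, and $f(t)=x$ holds automatically.

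To extend $f$ off $D$ I will use a Helly-type theorem for metric-space-valued BV maps, which is the content of \cite{Belov}. The family $\{f_{T}\}$, extended to $I$ by the nearest-point construction at arbitrary points, is uniformly of bounded variation with pointwise relatively compact orbits. Hence some subsequence converges pointwise on all of $I$. Alternatively, take $T_m$ to be nets indexed by all finite subsets and use Tychonoff compactness of $\prod_{s\in I}F(s)$. A cluster point $f$ of the net $(f_T)$, where $f_T(s)$ is defined for $s\notin T$ by any choice in $F(s)$, satisfies $f(s)\in F(s)$ and $f(t)=x$. For any finite partition $P$, eventually $T\supset P$, and so $\sum_P d(f(t_i),f(t_{i-1}))\le V_a^b(F)$ by continuity of $d$ along the cluster net.

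\emph{Main obstacle.} Passing to the limit while keeping the variation bound on every partition is the delicate step. The Tychonoff/net approach avoids needing Helly's theorem and is the route I will try first. For each finite $P$, the set of $g\in\prod_s F(s)$ with $g(t)=x$ and $\sum_P d(g(t_i),g(t_{i-1}))\le V_a^b(F)$ is closed. The finite-set construction of Step 1 shows that any finitely many such sets intersect, using $T=P_1\cup\dots\cup P_k\cup\{t\}$, because variation over a subpartition is dominated by variation over $T$. Compactness of the product then yields a point in all of them, which is the required $f\in\mathcal F$ with $f(t)=x$ and $V_a^b(f)\le V_a^b(F)$.
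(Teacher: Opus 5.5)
The paper gives no proof of this statement. It is quoted from Belov--Chistyakov, where, as I understand that source, it is obtained as an application of their Helly-type selection principle: a pointwise relatively compact family of metric-space-valued maps with uniformly bounded variation has a pointwise convergent subsequence whose limit has variation at most the $\liminf$.

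Your final argument is correct and takes a different, self-contained route.
\begin{itemize}
\item The sets $C_P=\{g\in\prod_{s\in I}F(s): g(t)=x,\ \sum_P d(g(t_i),g(t_{i-1}))\le V_a^b(F)\}$ are closed in the product topology, because each depends continuously on finitely many coordinates.
\item The nearest-point construction on $T=P_1\cup\dots\cup P_k\cup\{t\}$, extended arbitrarily off $T$, lies in $C_{P_1}\cap\dots\cap C_{P_k}$. This holds since $d(f_T(t_{j+1}),f_T(t_j))=\mathrm{dist}(f_T(t_j),F(t_{j+1}))\le H_d(F(t_j),F(t_{j+1}))$, and passing to a coarser partition only decreases the sum by the triangle inequality.
\item Tychonoff's theorem then gives an $f$ in every $C_P$. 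This $f$ is a selection at \emph{every} point, with $f(t)=x$ and $V_a^b(f)\le V_a^b(F)$, reading the typo $V_a^f(F)$ as $V_a^b(F)$.
\end{itemize}

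Comparing the two routes: the Helly approach yields the selection as a pointwise limit of a sequence. It also rests on a general compactness theorem for families of BV maps that can be reused elsewhere. Yours is shorter and needs no limiting procedure on a countable dense set followed by an extension to all of $I$. The price is that it is purely existential, since it relies on Tychonoff's theorem and hence on a form of choice.

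Your method also covers the Lipschitz version quoted later in the paper with no extra work. Replace the constraint in $C_P$ by the closed conditions $d(g(p),g(q))\le L|p-q|$ for pairs $p,q$. The nearest-point construction gives $d(f_T(t_{j+1}),f_T(t_j))\le L(t_{j+1}-t_j)$ on consecutive nodes, and the triangle inequality extends this to all pairs in $T$.

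You should, however, drop the preliminary material in Steps 1 and 2.
\begin{itemize}
\item A step function that is constant between nodes cannot in general be a selection; take $F(s)=\{s\}$.
\item The claim that a nearest-point extension at arbitrary points produces a uniformly BV family is unjustified.
\item $\bigcup_s F(s)$ is totally bounded, but it is relatively compact only when $X$ is complete.
\end{itemize}
None of this is needed once the finite-intersection argument is in place.
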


\begin{theorem} \label{Bvar}
 Let the set valued mappings $F:\mathcal{I} \rightrightarrows \mathbb{R}$ with non empty compact convex values, Borel measurable and integrably bounded on $\mathcal{I}.$ Let set valued Katugampola integral $_a^\rho\mathfrak{D}^\nu F$ of $F$ with $\rho \neq -1, \nu >1.$ Suppose $F$ is bounded variation on the interval $\mathcal{I}$ with respect to the Hausdorff metric $H_d,$ then the operator maps $t \to _a^\rho\mathfrak{D}^\nu F(t)$ guaranties the existence of a continuous selection that is also of bounded variation.
\end{theorem}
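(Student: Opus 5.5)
The plan is to combine Theorem \ref{BoundVar} with the selection principle of Theorem \ref{selec}, and then upgrade the resulting selection to a continuous one. A second, more explicit route is also available, and I would use it as the main argument.

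\textbf{Step 1: regularity of the integral map.} By Theorem \ref{BoundVar}, under the stated hypotheses ($\rho\neq -1$, $\nu>1$, compact convex values, Borel measurable, integrably bounded, $F$ of bounded variation), each set ${}_a^\rho\mathfrak{D}^\nu F(t)$ is a nonempty compact convex subset of $\mathbb{R}$. Moreover, $t\mapsto {}_a^\rho\mathfrak{D}^\nu F(t)$ is of bounded variation with respect to $H_d$. By Theorem \ref{compact,continuity} this map is also $H_d$-continuous.

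\textbf{Step 2: existence of a BV selection.} Apply Theorem \ref{selec} with $X=\mathbb{R}$ and the compact-valued BV map ${}_a^\rho\mathfrak{D}^\nu F$. Fix $t_0\in\mathcal{I}$ and any $y_0\in {}_a^\rho\mathfrak{D}^\nu F(t_0)$. This yields $g:\mathcal{I}\to\mathbb{R}$ with $g(t)\in {}_a^\rho\mathfrak{D}^\nu F(t)$, $g(t_0)=y_0$, and $V_a^b(g)\le V_a^b({}_a^\rho\mathfrak{D}^\nu F)<\infty$.

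\textbf{Step 3: continuity, the main obstacle.} The selection principle only guarantees bounded variation, not continuity, so a BV selection may jump. To obtain continuity I would not rely on $g$ directly. Instead I would use the interval representation (\ref{interval}) from the proof of Theorem \ref{BoundVar}:
\[
{}_a^\rho\mathfrak{D}^\nu F(t)=\big[{}_a^\rho\mathfrak{D}^\nu \underline{f}(t),\,{}_a^\rho\mathfrak{D}^\nu \overline{f}(t)\big].
\]
Both endpoint functions were shown there to be Lipschitz on $\mathcal{I}$, hence continuous and of bounded variation. Since $\underline{f}$ is itself an integrable selection of $F$, the function $h(t):={}_a^\rho\mathfrak{D}^\nu \underline{f}(t)$ is a selection of ${}_a^\rho\mathfrak{D}^\nu F$ that is simultaneously continuous and BV; the same holds for $\overline{f}$.

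To also match the pointwise prescription of Theorem \ref{selec}, I would take a convex combination. Choose $\lambda\in[0,1]$ with
\[
y_0=\lambda\,{}_a^\rho\mathfrak{D}^\nu \underline{f}(t_0)+(1-\lambda)\,{}_a^\rho\mathfrak{D}^\nu \overline{f}(t_0),
\]
and set $h_\lambda=\lambda\,{}_a^\rho\mathfrak{D}^\nu \underline{f}+(1-\lambda)\,{}_a^\rho\mathfrak{D}^\nu \overline{f}$. This is the Katugampola integral of the selection $\lambda\underline{f}+(1-\lambda)\overline{f}$, which is a selection by convexity of the values (Theorem \ref{convex}). It is Lipschitz with the constant from Theorem \ref{BoundVar}, and its variation is at most $V_a^b({}_a^\rho\mathfrak{D}^\nu \underline{f})+V_a^b({}_a^\rho\mathfrak{D}^\nu \overline{f})$.

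\textbf{Points to verify.} The representation (\ref{interval}) requires $\underline{f}$ and $\overline{f}$ to be measurable and integrable. Both are BV by the argument in Theorem \ref{BoundVar}, hence Borel measurable and bounded, so this holds. I expect this measurability point, together with the continuity of the endpoint integrals, to be the only delicate step. Both were already established in Theorem \ref{BoundVar}, so the proof reduces to assembling these facts.
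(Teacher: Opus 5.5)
Your proof is correct, and it departs from the paper exactly where it matters. The paper's proof is essentially your Steps 1--2 alone. It checks that $H={}_a^\rho\mathfrak{D}^\nu F$ is nonempty, compact-valued, $H_d$-continuous and of bounded variation, and invokes Theorem \ref{selec}. It then simply asserts that the resulting selection $h$ is continuous with $V_a^b(h)\le V_a^b(H)$.

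As you observe, Theorem \ref{selec} as quoted yields only bounded variation. Getting continuity along that route would need a sharper, local form of the principle: $V(h,[s,t])\le V(H,[s,t])$ for all $s<t$, combined with continuity of $t\mapsto V(H,[a,t])$ for a continuous BV map. The paper does not state this.

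Your Step 3 bypasses the issue. Since $\underline f,\overline f$ are bounded Borel selections of $F$, their Katugampola integrals are selections of $H$ by definition. The derivative estimate from the proof of Theorem \ref{BoundVar} makes them continuous and of bounded variation, and convex combinations reach any prescribed $(t_0,y_0)$. Step 2 is therefore superfluous. In your argument the BV hypothesis on $F$ serves only to make $\underline f,\overline f$ bounded and measurable.

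The selection step of the paper's route does not use the interval structure of the values, so it would apply to any compact-valued BV map, provided the continuity gap is filled. Your route is explicit, elementary and fully justified, but relies on the one-dimensional representation (\ref{interval}).

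Two refinements. First, (\ref{interval}) together with $H_d([\alpha,\beta],[\gamma,\delta])=\max\{|\alpha-\gamma|,|\beta-\delta|\}$ gives $|h_\lambda(t)-h_\lambda(s)|\le H_d(H(t),H(s))$. Hence $V_a^b(h_\lambda)\le V_a^b(H)$, which is sharper than your stated bound and matches the paper's.

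Second, the Lipschitz claim you import from Theorem \ref{BoundVar} uses $t^\rho\le b^\rho$, i.e.\ $\rho\ge 0$. For $a=0$ and $(\rho+1)\nu<1$ it fails: for instance ${}_0^\rho\mathfrak{D}^\nu 1(t)=\frac{(\rho+1)^{-\nu}}{\Gamma(\nu+1)}t^{(\rho+1)\nu}$. The endpoint integrals are nonetheless still absolutely continuous, which is all your argument needs.
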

\begin{proof}
  Let  $\rho \neq -1, \nu>1$ and $F:\mathcal{I} \rightrightarrows \mathbb{R}$ with non empty compact convex values, Borel measurable and integrably bounded on $\mathcal{I}.$ Define the set valued mapping \[ H(t) := _a^\rho\mathfrak{D}^\nu F(t) \text{ for all } t \in \mathcal{I}.\]
  By applying Theorem \ref{bounded}, it follows that, for every $t \in \mathcal{I},$ the set valued Katugampola fractional integral set is non-empty and by Theorem \ref{compact,continuity}, the set $H(t)$ is compact. Further, Theorems \ref{compact,continuity} and \ref{BoundVar} imply that the set-valued $H$ is continuous on $\mathcal{I}$ and possesses bounded variation with respect to the Hausdorff metric $H_d$.
  Therefore, all assumptions required in Theorem \ref{selec} of Belov and Chistyakov \cite{Belov} are fulfilled. Consequently, there exists a regular selection
  \[ h:\mathcal{I} \to \mathbb{R}\text{ such that } h(t) \in H(t)= _a^\rho\mathfrak{D}^\nu F(t) \text{ for all } t \in \mathcal{I}.\]
  Moreover, the selected function $h$ is continuous and has bounded variation on $\mathcal{I}$. In addition, its total variation is \[ V_a^b(h)\le V_a^b(H). \]
  Hence, a continuous regular selection of the set-valued fractional integral mapping is obtained.
  \end{proof}
  \begin{theorem}(Existence of more regular selections) \label{moreselecton} \cite[Theorem 3]{Belov}
      Let $(X,d)$ be a metric space. If $F:I \rightrightarrows X$ is set-valued mappings which is compact and of bounded variation. If $t \in I$ and $x\in F(t),$ then if $F$ is a Lipschitzian then it admits a Lipschitizian selection $f$ such that Lipschitz constant of $f$ is bounded by the Lipschitz constant of $F.$
  \end{theorem}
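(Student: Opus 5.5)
This statement is cited from \cite[Theorem 3]{Belov}, so I would first reduce it to the bounded-variation selection result, Theorem \ref{selec}, and then upgrade the variation bound to a Lipschitz bound by localizing to subintervals. Fix $t_0 \in I$ and $x \in F(t_0)$.

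First, a Lipschitz compact-valued $F$ with constant $C_F$ is of bounded variation. For any subinterval $[s,u]\subset I$ and any partition $s=\tau_0<\dots<\tau_k=u$ we have
\[\sum_{j} d_H(F(\tau_j),F(\tau_{j-1}))\le C_F\sum_j(\tau_j-\tau_{j-1}),\]
hence $V_s^u(F)\le C_F(u-s)$. So Theorem \ref{selec} gives a selection $f$ with $f(t_0)=x$ and $V_a^b(f)\le V_a^b(F)$. This bound alone does not give Lipschitz control, because it is only a global statement.

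The key step, and the main obstacle, is to get the local inequality $d(f(s),f(u))\le V_s^u(F)$ for all $s<u$. Once we have it, $d(f(s),f(u))\le C_F|u-s|$ follows immediately. The natural route is to recall how Belov and Chistyakov build the selection. One introduces the variation function $\varphi(t)=V_a^t(F)$ and reparametrizes $F$ by $\varphi$. This produces a map $G$ on $\varphi(I)$ with $F=G\circ\varphi$, and $G$ is $1$-Lipschitz in the Hausdorff metric because
\[d_H(F(s),F(u))\le V_s^u(F)=\varphi(u)-\varphi(s).\]
Jumps of $\varphi$ are not an issue here, since in the Lipschitz case $\varphi$ is itself $C_F$-Lipschitz and hence continuous.

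I would then build a $1$-Lipschitz selection $g$ of $G$ through the point $\varphi(t_0)$. Working outward from $\varphi(t_0)$ on a dyadic grid, at each new node I choose a nearest point of the compact set $G(\cdot)$ to the previously chosen value. This is possible because the sets are compact, and the successive distances are bounded by the grid step. I pass to the limit using compactness, either by Arzel\`a--Ascoli or by the Helly selection principle for the equi-Lipschitz approximants in the compact metric setting, which preserves the $1$-Lipschitz bound. Setting $f=g\circ\varphi$ then gives
\[d(f(s),f(u))\le \varphi(u)-\varphi(s)\le C_F|u-s|.\]

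The delicate point is the limit step. Pointwise convergence must be obtained on a dense set, which needs compactness of $\bigcup_t F(t)$. This holds because a Lipschitz compact-valued map on a compact interval has totally bounded union, and where completeness is needed one passes to the completion of $X$. One must also check that the limit values still lie in the closed sets $F(t)$, which follows since the $F(t)$ are compact.

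Finally, $f(t_0)=x$ is preserved because it holds at every stage of the construction. Hence $f$ is Lipschitz with constant at most $C_F$, as claimed.
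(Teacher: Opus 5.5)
The paper gives no proof of this statement. It is quoted from Belov and Chistyakov (their Theorem 3) and used as a black box, so there is no in-paper argument to compare with. Your proposal is a correct, self-contained proof along the standard natural-parametrization route:
\begin{itemize}
\item reduce via $\varphi(t)=V_a^t(F)$ to a $1$-Lipschitz compact-valued map $G$ on $[0,V_a^b(F)]$;
\item build nearest-point selections on grids through $\varphi(t_0)$;
\item pass to a diagonal limit;
\item compose, $f=g\circ\varphi$.
\end{itemize}
Compared with the bare citation, your route yields the stronger local estimate $d(f(s),f(u))\le V_s^u(F)$ for all $s<u$. That single estimate gives the bounded-variation conclusion, the Lipschitz conclusion with constant $C_F$, and $f(t_0)=x$ at once.

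A few points should be tightened.
\begin{itemize}
\item $G$ is well defined because $\varphi(s)=\varphi(u)$ forces $V_s^u(F)=0$. Hence $d_H(F(s),F(u))=0$, and for closed sets this means $F(s)=F(u)$.
\item Take the grids nested, e.g.\ $\{\varphi(t_0)+kL2^{-n}:k\in\mathbb{Z}\}\cap[0,L]$ with $L=V_a^b(F)$. Then any two points of $D=\bigcup_n(\text{grid}_n)$ are eventually both nodes, so the $1$-Lipschitz bound on the grids passes to the limit on $D$.
\item Compactness of $\bigcup_t F(t)$ and passage to the completion of $X$ are unnecessary. Each $g_n(\tau)$ lies in the single compact set $G(\tau)$, so a diagonal argument over the countable set $D$ already gives pointwise convergence on $D$.
\item Arzel\`a--Ascoli and Helly do not apply directly, because each $g_n$ is defined only on grid points.
\item The extension from $D$ to all of $[0,L]$ should be written out. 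For $s\notin D$ and $D\ni\tau_m\to s$, the sequence $(g(\tau_m))$ is Cauchy by the $1$-Lipschitz bound. It also satisfies $d(g(\tau_m),G(s))\le d_H(G(\tau_m),G(s))\le|\tau_m-s|$. Hence it converges to a point of the compact set $G(s)$, and the $1$-Lipschitz bound survives the limit.
\end{itemize}
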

  
\begin{theorem}
    Let set-valued mapping $F$ satisfies the condition of Theorem \ref{Bvar}. Then if $F$ is Lipschitz on $\mathcal{I}$ with respect to Hausdorff metric $H_d,$ then the mapping admits a Lipschitz selection.
\end{theorem}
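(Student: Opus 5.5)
The statement is read in the same way as Theorem \ref{Bvar}: the claim is that the set-valued Katugampola integral $H(t):={}_a^\rho\mathfrak{D}^\nu F(t)$ admits a Lipschitz selection. The plan parallels the proof of Theorem \ref{Bvar}, with Theorem \ref{moreselecton} in place of Theorem \ref{selec}.

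First I verify the structural hypotheses on $H$.

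1. By Theorem \ref{bounded}, each $H(t)$ is nonempty.
2. By Theorem \ref{compact,continuity}, each $H(t)$ is compact and $H$ is $H_d$-continuous.
3. By Theorem \ref{convex}, each $H(t)$ is convex, hence a closed interval. As in the proof of Theorem \ref{BoundVar}, it equals $[{}_a^\rho\mathfrak{D}^\nu\underline{f}(t),{}_a^\rho\mathfrak{D}^\nu\overline{f}(t)]$.

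Next I establish the Lipschitz property of $H$ in $H_d$. This is exactly the Lipschitz-preservation theorem of Section \ref{sec4}, since $F$ is Lipschitz and $\rho\neq-1$, $\nu>1$. It gives
\[ H_d(H(t_1),H(t_2))\le K|t_1-t_2|,\qquad K=\frac{A(\rho+1)^{1-\nu}}{\Gamma(\nu)}b^\rho(b^{\rho+1}-a^{\rho+1})^{\nu-1}, \]
where $A=B_0+C_F(b-a)$ is the uniform bound on $F$. A Lipschitz map on a compact interval is automatically of bounded variation, with $V_a^b(H)\le K(b-a)$. Alternatively, Theorem \ref{BoundVar} gives this directly, since a Lipschitz $F$ is of bounded variation.

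Then I apply Theorem \ref{moreselecton} in the metric space $(\mathbb{R},|\cdot|)$. Here $H$ is compact-valued, of bounded variation and Lipschitz, so for any $t_0\in\mathcal{I}$ and $x_0\in H(t_0)$ there is a selection $h$ with $h(t)\in H(t)$ for all $t$, $h(t_0)=x_0$, and Lipschitz constant at most $K$. This $h$ is the required Lipschitz selection.

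The main obstacle is keeping the invocation of Theorem \ref{moreselecton} honest. Its hypotheses require bounded variation plus a Lipschitz property in $H_d$, and I must make sure the Lipschitz constant of $H$ is finite. This fails if $\rho<0$ with $a=0$, because the factor $t^\rho$ blows up near $0$. I would handle this by noting that the derivative bound in the Section \ref{sec4} proof requires $b^\rho$ to dominate $t^\rho$, which holds when $\rho\ge0$; in that case the paper's constant is valid. If $\rho<0$, I would assume $a>0$ and replace $b^\rho$ by $a^\rho$.

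As a fallback that avoids the abstract selection theorem, the interval structure of $H(t)$ gives an explicit Lipschitz selection. Take $h={}_a^\rho\mathfrak{D}^\nu\underline{f}$, the lower endpoint. Since $\underline{f}$ is bounded by $A$, the derivative estimate from Theorem \ref{BoundVar} shows directly that $h$ is Lipschitz with constant $K$.
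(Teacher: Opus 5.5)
Your main line is the paper's own argument: you get the $H_d$-Lipschitz property (hence bounded variation) of $H={}_a^\rho\mathfrak{D}^\nu F$ from the Lipschitz-preservation theorem of Section~\ref{sec4} and then apply Theorem~\ref{moreselecton}, and your fallback ${}_a^\rho\mathfrak{D}^\nu\underline{f}$ (indeed ${}_a^\rho\mathfrak{D}^\nu f$ for any bounded selection $f$ of $F$) is a valid, more elementary route that needs neither convexity nor the Belov--Chistyakov theorem. Your parameter caveat is a genuine correction the paper misses, but when $a=0$ the sharp condition is $(\rho+1)\nu\ge 1$ rather than $\rho\ge 0$, because the derivative bound there is $t^{\rho}(t^{\rho+1})^{\nu-1}=t^{(\rho+1)\nu-1}$ (for $F\equiv\{1\}$, $a=0$, $\rho=-\tfrac12$, $\nu=\tfrac32$ the map $H$ is the single-valued, non-Lipschitz $t\mapsto\{c\,t^{3/4}\}$, so the statement is false there), and throughout one needs $\rho>-1$, not merely $\rho\neq-1$, for the kernel to be real-valued.
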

\begin{proof}
    Under the assumptions of Theorem \ref{Bvar} along the same lines of proof, further if $H$ is Lipschitz on $\mathcal{I}$ with respect to Hausdorff metric $H_d.$ Hence, under the assumptions of Theorem \ref{moreselecton} of Belov and Chistyakov \cite{Belov} are applicable to the mapping $H$. Therefore, there exists a single-valued Lipschitz selection
    \[ h:\mathcal{I} \to \mathbb{R} \text{ such that } h(t) \in H(t)= _a^\rho\mathfrak{D}^\nu F(t) \text{ for every } t \in \mathcal{I}. \]
Further, the Lipschitz constant of $h$ is bounded above by the Lipschitz constant of $H.$ Thus, the set-valued Katugampola fractional integral mapping admits a Lipschitz continuous selection on $\mathcal{I}$, which proves the result.
\end{proof}
\begin{theorem}
  Let the set-valued mapping $F:\mathcal{I} \rightrightarrows \mathbb{R}$ be nonempty compact values with $\rho \neq -1, \nu > 0.$ Then for each $t \in \mathcal{I},$ let $H(t)= _a^\rho\mathfrak{D}^\nu F(t).$ By Theorem \ref{bounded}, for each $t \in \mathcal{I}$ the $H(t)$ is a non empty compact subset of $\mathbb{R}.$ Consider the extremal selections  
  \[h_+(t):=\sup H(t) \text{ and }  h_-(t):=\inf H(t)\]
  
  Let $F$ is non empty compact values, Borel measurable and integrably bounded on $\mathcal{I}$. Then the following holds: 
  \begin{enumerate}[label=(\alph*)]
      \item If $ \rho \neq -1, \nu > 0,$ then $H$ is continuous on $\mathcal{I}$ and the extreme selections $h_+$ and $h_-$ are continuous selections of $H$ on $\mathcal{I}$. 
      \item If $F$ is also convex valued and of bounded variation on $\mathcal{I}$ and $ \rho \neq -1, \nu > 1,$ then $H$ is bounded variation on $\mathcal{I}$ and the extreme selections $h_+$ and $h_-$ are of bounded variation on $\mathcal{I}.$ Also 
      \[ V_a^b(h_+) \le V_a^b(H) \text{ and } V_a^b(h_-) \le V_a^b(H).\]
      \item If $F$ is also convex valued and Lipschitz on $\mathcal{I}$ and $ \rho \neq -1, \nu > 1,$ then $H$ is Lipschitz on $\mathcal{I}$ and the extreme selections $h_+$ and $h_-$ are Lipschitz on $\mathcal{I}.$ Also Lipschitz constant of both $h_+$ and $h_-$ are bounded by the Lipschitz constant of $H.$
  \end{enumerate}
\end{theorem}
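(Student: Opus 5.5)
The whole argument will rest on one elementary fact about compact subsets of $\mathbb{R}$: for nonempty compact $A,B\subset\mathbb{R}$,
\[
|\sup A-\sup B|\le H_d(A,B), \qquad |\inf A-\inf B|\le H_d(A,B).
\]
I would prove this first as a short lemma. Let $\alpha=\sup A$, which is attained since $A$ is compact. By the definition of $H_d$ there is $b\in B$ with $|\alpha-b|\le H_d(A,B)$. Hence $\sup B\ge b\ge \alpha-H_d(A,B)$. The same argument with $A$ and $B$ swapped gives the other inequality, and the infimum case is identical.

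Once this is in hand, $h_\pm(t)\in H(t)$ for every $t$, because $H(t)$ is nonempty and compact (Theorems \ref{bounded} and \ref{compact,continuity}). So $h_+$ and $h_-$ are genuine (everywhere-defined) selections of $H$. Moreover,
\[
|h_\pm(t)-h_\pm(s)|\le H_d(H(t),H(s)) \qquad \text{for all } s,t\in\mathcal{I}.
\]
Each regularity statement then transfers from $H$ to $h_\pm$ directly.

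For part (a), Theorem \ref{compact,continuity} already gives that $H$ is continuous in $H_d$ when $\rho\neq-1$ and $\nu>0$. The inequality above then makes $h_\pm$ continuous, with modulus of continuity dominated by that of $H$.

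For part (b), Theorem \ref{BoundVar} gives that $H$ has compact convex values and is of bounded variation in $H_d$ when $\nu>1$. For any partition $\{t_i\}$ of $\mathcal{I}$,
\[
\sum_i |h_\pm(t_i)-h_\pm(t_{i-1})|\le \sum_i H_d(H(t_i),H(t_{i-1}))\le V_a^b(H).
\]
Taking the supremum over partitions yields $V_a^b(h_\pm)\le V_a^b(H)<\infty$.

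For part (c), the Lipschitz preservation theorem of Section \ref{sec4} gives $H_d(H(t),H(s))\le K|t-s|$. Therefore $|h_\pm(t)-h_\pm(s)|\le K|t-s|$, so the Lipschitz constants of $h_\pm$ are bounded by that of $H$.

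In the convex cases one could alternatively use the identification \eqref{interval}: there $h_-={}_a^\rho\mathfrak{D}^\nu\underline{f}$ and $h_+={}_a^\rho\mathfrak{D}^\nu\overline{f}$, which are explicitly Lipschitz. The general lemma is cleaner, though, and it also covers part (a), where convexity is not assumed.

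The main obstacle is not the estimates but a bookkeeping point. Part (a) needs $H(t)$ to be compact and nonempty under only the hypotheses of compact values, Borel measurability and integrable boundedness. That is the closedness step of Theorem \ref{compact,continuity}, which relies on Dunford–Pettis weak compactness. Without convexity of $F(t)$, the weak limit of selections need not itself be a selection; Aumann-type integrals are closed only after convexification, by Lyapunov's theorem. If that step proves problematic, I would fall back on the following. Since the kernel $(t^{\rho+1}-x^{\rho+1})^{\nu-1}x^\rho$ is a nonnegative $L^1$ weight on $[a,t]$, the set $H(t)$ equals the weighted Aumann integral of $F$. By Lyapunov's theorem this set is convex, and it equals the corresponding integral of $\operatorname{co}F$, whose weak closedness is standard. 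The extremal values are then attained by the measurable selections $\max F$ and $\min F$. This gives $h_\pm(t)\in H(t)$ directly, with $h_+$ and $h_-$ equal to the Katugampola integrals of $\max F$ and $\min F$.
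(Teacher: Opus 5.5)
This is essentially the paper's proof: the paper also gets $|h_\pm(t)-h_\pm(s)|\le H_d(H(t),H(s))$ from the definition of $H_d$ (you supply the short argument it omits) and then transfers continuity, the variation bound and the Lipschitz bound from $H$ to $h_\pm$ via Theorems \ref{compact,continuity}, \ref{BoundVar} and the Lipschitz theorem of Section \ref{sec4}. Your flag on the weak-limit step of Theorem \ref{compact,continuity} is justified, and your remark settles the point this theorem needs without Lyapunov: the kernel is nonnegative, $\min F\le f\le\max F$, and $\max F,\min F$ are themselves selections, so $h_\pm(t)\in H(t)$. However, this fallback, like the paper's Theorems \ref{bounded} and \ref{compact,continuity}, tacitly requires $\int_a^t (t^{\rho+1}-x^{\rho+1})^{\nu-1}x^{\rho}m(x)\,dx<\infty$, which can fail for $0<\nu<1$: with $\rho=0$, $\nu=\tfrac12$ and $F(x)=[0,|x-t_0|^{-1/2}]$ for some $t_0\in(a,b]$, one gets $\sup H(t_0)=+\infty$, so part (a) as stated needs an extra hypothesis such as a bounded $m$.
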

\begin{proof}
    By Theorem \ref{bounded}, for each $t \in \mathcal{I},$ the set $H(t)=_a^\rho\mathfrak{D}^\nu F(t)$ is a non-empty compact subset of $\mathbb{R}.$ Therefore infimum and supremum element of $H(t)$ exist for each $t \in \mathcal{I}.$ Hence the selections $h_-$ and $h_+$ are well defined.  
    \begin{enumerate}[label=(\alph*)]
        \item Under the assumptions given in Theorem, the definition of Hausdorff metric implies that 
        \[ |h_+(t)-h_+(s)| \le H_d(H(t),H(s)) \text{ for all } t,s \in \mathcal{I}\]
        and similarly, 
        \[ |h_-(t)-h_-(s)| \le H_d(H(t),H(s)) \text{ for all } t_1,t_2 \in \mathcal{I} .\]
        Since $H$ is continuous, both $h_+$ and $h_-$ are continuous. 
        \item Now, both $h_+$ and $h_-$ are selections of $H$ and are of bounded variation on $\mathcal{I}.$ Let $ \Gamma=\{a=t_0<t_1 <\ldots < t_n=b  $ be any partition of $\mathcal{I}.$ We obtain 
        \[ |h_+(t_i)-h_+(t_{i-1})| \le H_d(H(t_i),H(t_{i-1}))\]
        for each $i=1,2,\ldots,n.$ Summing over all $i,$ we get 
        \[ \sum_{i=1}^n |h_+(t_i)-h_+(t_{i-1})| \le \sum_{i=1}^nH_d(H(t_i),H(t_{i-1})) \]
        Taking the supremum over all partition yields, 
        \[ V_a^b(h_+)\le V_a^b(H).\]
        Repeating the same arguments for $h_-,$ we obtain that $V_a^b(h_-)\le V_a^b(H).$ Therefore both extremal selections are of bounded variation on $\mathcal{I}.$
         \item Assuming that the assumptions given in Theorem \ref{Bvar} are satisfied, then  $H$ is Lipschitz on $\mathcal{I}$ with Lipschitz constant $K.$ Hence for all $t,s \in \mathcal{I},$ we have 
         \[ |h_+(t)-h_+(s)| \le H_d(H(t),H(s)) \le K|t-s|.\]
         Similar inequality hols for the $h_-$ also. Thus both $h_+$ and $h_-$ are Lipschitz on $\mathcal{I}.$ Moreover, 
         Lipschitz constant of both $h_+$ and $h_-$ are bounded by the Lipschitz constant of $H.$ 
    \end{enumerate}
\end{proof}
\section{Conclusion and Future Work}
In this paper, we develop the theory of the Katugampola fractional integral for set-valued mappings, which extends the concept of Riemann Liouville fractional integral to the more broder Katugampola operator. Further, we studied the existence of selection for the  set-valued Katugampola fractional integral based on the selection principle of  Belov and Chistyakov. For further work, one can explore the set-valued Katugampola fractional integral on the rectangular domain of $[a,b]\times[c,d].$ For the single valued maps the dimensional properties and  the Hausdorff and Box dimensions of the graph of Katugampola fractional integral are extensively explored. An interesting question is whether an analogous dimension theory can be developed for the set-valued Katugampola fractional integral.
\section*{Statements and Declarations}
 The authors have no competing interests to declare that are relevant to the content of this article. There are no data associated with this paper. 
   
\section*{Acknowledgements}
 The first author is supported by the University Grant Commission in the form of Junior Research Fellow.

\bibliographystyle{amsalpha}

\end{document}